\numberwithin{equation}{section}
\newtheorem{theorem}{Theorem}[section] %
\newtheorem{lemma}[theorem]{Lemma} %
\newtheorem{corollary}[theorem]{Corollary} %
\begin{document}
\title{On the critical values of Burr's problem \footnote{e-mail addresse: yanxiaohui\b{ }1992@163.com(X.-H. Yan), wubingling911219@163.com(B.-L. Wu)}}

\author{  Xiao-Hui Yan\\
\small  School of Mathematics and Statistics, \\
\small  Anhui Normal University,  Wuhu  241003,  P. R. China\\
Bing-Ling Wu\\
\small  School of Science, \\
\small  Nanjing University of Posts and Telecommunications,  Nanjing  210023,  P. R. China\\
}
\date{}
\maketitle \baselineskip 18pt \maketitle \baselineskip 18pt

{\bf Abstract.} Let $A$ be a sequence of positive integers and $P(A)$ be the set of all integers which are the finite sum of distinct terms of $A$. For given positive integers $u\in\{4,7,8\}\cup\{u:u\ge11\}$ and $v\ge 3u+5$ we know that $u+v+1$ is the critical value of $b_3$ such that there exists a sequence $A$ of positive integers for which $P(A)=\mathbb{N}\backslash \{u<v<b_3<\cdots\}$. In this paper, we obtain the critical value of $b_k$.

\vskip 3mm
{\bf 2010 Mathematics Subject Classification:} 11B13

{\bf Keywords:} inverse problem, subset sum, complement of sequences

\vskip 5mm

\section{Introduction}
Let $\mathbb{N}$ be the set of all nonnegative integers. For any sequence of positive integers $A=\{a_1<a_2<\cdots\}$, let $P(A)$ be the subset sum set of $A$, that is,
$$P(A)=\left\{\sum_{i}\varepsilon_i a_i:\sum_{i}\varepsilon_i<\infty, \varepsilon_i\in\{0,1\}\right\}.$$
Here we note that $0\in P(A)$.

In 1970, Burr \cite{Burr} posed the following problem: which sets $S$ of integers are equal to $P(A)$ for some $A$? For the existence of such set $S$ he mentioned that if the complement of $S$ grows sufficiently rapidly such as $b_1>x_0$ and $b_{n+1}\ge b_n^2$, then there exists a set $A$ such that $P(A)=\mathbb{N}\setminus\{b_1,b_2,\cdots\}$. But this result is unpublished. In 1996, Hegyv\'{a}ri \cite{Hegyvari} proved the following result.
\begin{theorem}\cite[Theorem 1]{Hegyvari} If $B=\{b_1<b_2<\cdots\}$ is a sequence of integers with $b_1\ge x_0$ and
$b_{n+1}\ge5b_n$ for all $n\ge1$, then there exists a sequence $A$ of positive integers for which $P(A)=\mathbb{N}\setminus B$.
\end{theorem}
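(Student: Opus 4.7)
The plan is to construct $A$ as the union of an ascending chain $A_0 \subset A_1 \subset \cdots$ of finite sets, maintaining after stage $n$ the invariant
$$ P(A_n) = \{0, 1, \ldots, T_n\} \setminus \{b_1, \ldots, b_n\}, \qquad T_n := \sum_{a \in A_n} a, $$
with $b_n \le T_n < b_{n+1}$ for $n \ge 1$. The base is $A_0 = \emptyset$, $T_0 = 0$. Note that $T_n \in P(A_n)$ rules out $T_n \in \{b_1,\dots,b_n\}$, and so in fact $T_n \ge b_n + 1$; this slack is exploited below.

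The induction step proceeds in two phases. Phase (i), the \emph{push}: iteratively adjoin new positive integers $c$ obeying $c \le T - b_n$ (with $T$ the running threshold) and $c \notin \{b_j - b_i : 1 \le i < j \le n\}$. A short calculation via the identity $P(A \cup \{c\}) = P(A) \cup (c + P(A))$ shows that any such $c$ preserves the invariant while raising the threshold by $c$: each shifted hole $c + b_i$ lands inside $[c, T]$ and is absorbed by $P(A)$ thanks to the two constraints on $c$. The growth $b_{n+1} \ge 5 b_n$ forces every positive difference $b_j - b_i$ to exceed $3$, so $c \in \{1,2,3\}$ is always admissible; thereafter the window $[1, T - b_n]$ grows geometrically, and a doubling-plus-final-correction strategy reaches exactly $T = b_{n+1} - 1$ in finitely many steps.

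Phase (ii), \emph{hole creation}: adjoin $c^\star := b_{n+1} - b_n$ (and take $c^\star := b_1 + 1$ at the base $n = 0$, where there are no old holes). The new sum set equals
$$ [0,\, b_{n+1} - 1] \;\cup\; [c^\star,\, 2 b_{n+1} - b_n - 1] $$
minus old holes $\{b_i\}_{i \le n}$ and shifted holes $\{c^\star + b_i\}_{i \le n}$. The desired new hole at $b_{n+1} = c^\star + b_n$ survives because it sits strictly above the first interval and is a shifted hole in the second. For $i < n$, the shifted value $c^\star + b_i$ lies in $[0, b_{n+1} - 1]$ and cannot equal any $b_\ell$---otherwise $b_{n+1} = b_n + b_\ell - b_i \le 2 b_n$, contradicting $b_{n+1} \ge 5 b_n$---so it is covered by the first interval. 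The old holes satisfy $b_i \le b_n < c^\star$ (since $b_{n+1} > 2 b_n$) and therefore remain uncovered. The new threshold $T_{n+1} = 2 b_{n+1} - b_n - 1$ satisfies $T_{n+1} < b_{n+2}$ by $b_{n+2} \ge 5 b_{n+1}$, closing the induction.

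The main obstacle is Phase (i): one must reach the target $b_{n+1} - 1$ \emph{on the nose} while respecting $c \le T - b_n$ at each step and sidestepping the finitely many forbidden increments $b_j - b_i$. The $5 b_n$ growth is exactly what affords enough room for this bookkeeping; given that phase, Phase (ii) is a single-element addition and the remainder is the clean interval-and-hole accounting sketched above.
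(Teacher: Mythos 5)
The paper contains no proof of this statement---it is quoted verbatim from Hegyv\'ari's paper---so your proposal has to stand on its own, and Phase (i) contains a fatal error. When you adjoin $c$ to a set $A$ with $P(A)=[0,T]\setminus\{b_1,\dots,b_n\}$, the identity $P(A\cup\{c\})=P(A)\cup(c+P(A))$ imposes \emph{two} requirements, and you have checked only one. You verify that each shifted hole $c+b_i$ is absorbed by $P(A)$, but you never verify that the original holes survive, i.e.\ that $b_i\notin c+P(A)$ for each $i\le n$. That requires, for every $i$, either $c>b_i$ or $b_i-c\in\{b_1,\dots,b_n\}$. Your admissible increments $c\in\{1,2,3\}$ violate this at once: since $[0,b_1-1]\subseteq P(A)$, adding $c=1$ yields $b_1=1+(b_1-1)\in P(A\cup\{1\})$, filling in the hole at $b_1$. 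In fact, once the hole at $b_1$ exists, \emph{no} element $c$ with $1\le c\le b_1$ can ever be adjoined again (for $c=b_1$ use $0\in P(A)$; for $c<b_1$ use $b_1-c\in[1,b_1-1]\subseteq P(A)$), so the ``small increments, then doubling'' strategy collapses at the first induction step with $n\ge1$. Your Phase (ii) and its interval-and-hole bookkeeping are correct as far as they go; the error is confined to, but is fatal for, the push phase.

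The repair---and what the actual constructions of Hegyv\'ari and of Chen--Fang do---is to insist that every element adjoined after the hole at $b_n$ is created exceeds $b_n$; then $b_i<c$ for all $i\le n$ and the old holes are automatically preserved, while the shifted-hole condition becomes $b_n<c\le T-b_n$, $c\ne b_j-b_i$. Each such step sends $T$ to at most $2T-b_n$, so the threshold still grows geometrically, and the hypothesis $b_{n+1}\ge 5b_n$ is what makes the window $(b_n,\,T-b_n]$ wide enough to dodge the finitely many forbidden differences and to reach $b_{n+1}-1$ exactly. Note, however, that this forces the invariant to carry a stronger lower bound than $T_n\ge b_n$: to start the push at stage $n$ one needs $T_n\ge 2b_n+1$, and your Phase (ii) only delivers $T_{n+1}=2b_{n+1}-b_n-1<2b_{n+1}$, so the hole at $b_{n+1}$ must be created by a different (larger) element or at a later point in the push. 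These adjustments are exactly the bookkeeping that Hegyv\'ari's proof carries out.
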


In 2012, Chen and Fang \cite{ChenFang} obtained the following results.
\begin{theorem}\cite[Theorem 1]{ChenFang} Let $B=\{b_1<b_2<\cdots\}$ be a sequence of integers with $b_1\in\{4,7,8\}\cup\{b:b\ge11\}$ and
$b_{n+1}\ge3b_n+5$ for all $n\ge1$. Then there exists a sequence $A$ of positive integers for which $P(A)=\mathbb{N}\setminus B$.
\end{theorem}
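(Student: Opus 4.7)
The plan is an inductive construction. Build a nested sequence of finite sets $A_1 \subset A_2 \subset \cdots$ and set $A = \bigcup_n A_n$, maintaining the invariant that $P(A_n) = \{0, 1, \ldots, \sigma_n\} \setminus \{b_1, \ldots, b_n\}$, where $\sigma_n$ denotes the total sum of the elements of $A_n$. Because $\sigma_n \to \infty$, passing to the limit then yields $P(A) = \mathbb{N} \setminus B$.

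For the base case, I would exhibit $A_1$ explicitly for each permitted initial value $b_1 \in \{4,7,8\}\cup\{b\ge 11\}$. For example $A_1=\{1,2,5\}$ handles $b_1=4$ with $P(A_1)=\{0,1,2,3,5,6,7,8\}$; $A_1=\{1,2,3,8\}$ handles $b_1=7$ with $P(A_1)=\{0,\ldots,6,8,\ldots,14\}$; $A_1=\{1,2,4,9\}$ handles $b_1=8$; and $A_1=\{1,2,3,4,b_1+1\}$ handles $b_1\ge 11$. The forbidden initial values $b_1\in\{1,2,3,5,6,9,10\}$ can be ruled out as base cases by a short exhaustive argument showing that no finite $A_1$ of positive integers realises $P(A_1)=\{0,\ldots,\sigma_1\}\setminus\{b_1\}$ for these values, which explains the peculiar restriction on $b_1$ in the hypothesis.

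For the inductive step, given $A_n$ satisfying the invariant, adjoin new elements $a>\sigma_n$ one at a time. The basic move is to add $a=\sigma_n+1$, producing $P(A_n)\cup(P(A_n)+a)=\{0,\ldots,2\sigma_n+1\}$ with the original holes $\{b_1,\ldots,b_n\}$ and the shifted holes $\{b_i+a:1\le i\le n\}$ still missing. Further elements are adjoined whose only effect is to fill the shifted holes and to push $\sigma$ past $b_{n+1}$, while leaving $b_{n+1}$ itself as the unique new hole. The hypothesis $b_{n+1}\ge 3b_n+5$ is precisely what gives the slack needed: it places $b_{n+1}$ far enough above the shifted holes so that one can fill the latter without simultaneously filling $b_{n+1}$, and far enough above $\sigma_n$ so that the new element sizes can be chosen in the admissible range.

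The main obstacle will be the combinatorial bookkeeping of this inductive step: one must choose the new adjoined elements so that \emph{every} accidental hole created by translating $P(A_n)$ lies below $b_{n+1}$ and is subsequently re-covered by another translate, while $b_{n+1}$ itself is neither accidentally covered nor accompanied by another uncovered value in $\{\sigma_n+1,\ldots,\sigma_{n+1}\}$. Tracking the unions of translates $P(A_n)\cup(P(A_n)+a^{(1)})\cup(P(A_n)+a^{(2)})\cup\cdots$ and verifying the resulting hole pattern is the delicate heart of the argument, and it is there that both the constant $3b_n+5$ and the exclusion of small $b_1$ enter essentially.
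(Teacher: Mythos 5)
Your overall plan (a nested construction $A_1\subset A_2\subset\cdots$ whose subset sums form an interval with prescribed holes) is the right general shape, and it is how the cited result is proved; note that the present paper does not reprove this theorem but quotes it from Chen--Fang, while its own Lemma \ref{lem:2.1} runs the same kind of construction in a special case. However, the invariant you propose, $P(A_n)=[0,\sigma_n]\setminus\{b_1,\dots,b_n\}$ with $\sigma_n$ the \emph{total} sum of $A_n$, cannot be maintained. For any finite set $A_n$ with element sum $\sigma_n$, the set $P(A_n)$ is symmetric about $\sigma_n/2$, because $x=\sum_{a\in S}a$ if and only if $\sigma_n-x=\sum_{a\in A_n\setminus S}a$. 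Hence the hole set $\{b_1,\dots,b_n\}$ would have to satisfy $b_i+b_{n+1-i}=\sigma_n$ for every $i$. For $n=3$ this forces $b_1+b_3=2b_2$, i.e.\ $b_3=2b_2-b_1<3b_2+5$, contradicting the growth hypothesis; so your invariant is already unattainable at the third step. A workable invariant must keep an \emph{even} number of holes arranged in symmetric pairs: besides the genuine holes $b_1,\dots,b_j$ it must carry temporary holes near the top of the interval (the mirrors $\sigma_n-b_i$), which are covered only when later elements are adjoined. Compare the identity $P(A_1\cup\{a_1,\dots,a_{s+k}\})=[0,\Sigma]\setminus\{d_1,\dots,d_{2m}\}$ in Lemma \ref{lem:2.1}, where the excluded set always has even cardinality and $d_{2m}=\Sigma-d_1$.

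Two further problems. First, your base case $A_1=\{1,2,3,4,b_1+1\}$ works only for $b_1=11$: for $b_1\ge 12$ the union $[0,10]\cup[b_1+1,b_1+11]$ misses all of $[11,b_1]$, not just $b_1$. What is needed is a set $A_1'$ with $P(A_1')=[0,b_1-1]$ exactly, followed by adjoining $b_1+1$; such an $A_1'$ exists precisely when $b_1-1\in\{1,3,6,7\}\cup[10,\infty)$, which is the source of the restriction on $b_1$ (this is the auxiliary fact the paper cites from Chen--Fang before equation \eqref{eq:2.2}). Second, the inductive step --- choosing the new elements so that the shifted and mirror holes are refilled while $b_{n+1}$ is created as the unique new permanent hole --- is the entire content of the theorem, and your proposal only states the goal of this step rather than carrying it out. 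As written it is a plan with a broken invariant, not a proof.
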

\begin{theorem}\cite[Theorem 2]{ChenFang} \label{ChenFang2} Let $B=\{b_1<b_2<\cdots\}$ be a sequence of positive integers with $b_1\in\{3,5,6,9,10\}$ or $b_2=3b_1+4$ or $b_1=1$ and $b_2=9$ or $b_1=2$ and $b_2=15$. Then there is no sequence $A$ of positive integers for which $P(A)=\mathbb{N}\setminus B$.
\end{theorem}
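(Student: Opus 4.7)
Let $A$ be any hypothetical sequence with $P(A) = \mathbb{N} \setminus B$; the plan is to derive a contradiction in each of the four listed sub-cases. Set $A_1 = A \cap [1, b_1 - 1]$ and $\sigma_1 = \sum A_1$. Two elementary observations drive the whole proof: (i) $1 \in A$ whenever $b_1 \geq 2$ and $2 \in A$ whenever $b_1 \geq 3$, because $1$ and $2$ admit no nontrivial distinct-sum representation; and (ii) for any finite set $S$, $P(S)$ is symmetric about $(\sum S)/2$. Since every element of $A \setminus A_1$ is at least $b_1$, we have $P(A_1) \supseteq [0, b_1 - 1]$ and $b_1 \notin P(A_1)$; combined with the symmetry, $P(A_1) \supseteq [\sigma_1 - b_1 + 1, \sigma_1]$, so if $b_1 \leq \sigma_1 \leq 2b_1 - 1$ then $b_1 \in P(A_1)$, a contradiction. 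Hence either $\sigma_1 = b_1 - 1$ (forcing $P(A_1) = [0, b_1 - 1]$) or $\sigma_1 \geq 2b_1$. This dichotomy is the workhorse.

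For case (a), $b_1 \in \{3, 5, 6, 9, 10\}$: the trivial bound $\sigma_1 \leq 1 + 2 + \cdots + (b_1 - 1) = b_1(b_1-1)/2$ rules out $\sigma_1 \geq 2b_1$ when $b_1 \in \{3, 5, 6\}$, and in the $\sigma_1 = b_1 - 1$ branch one enumerates the admissible $A_1 \supseteq \{1, 2\}$ and checks that none achieves $P(A_1) = [0, b_1 - 1]$. For $b_1 \in \{9, 10\}$ set $T = A_1 \setminus \{1, 2\} \subseteq [3, b_1 - 1]$ and translate $b_1 \notin P(A_1)$ into a list of forbidden singletons and pairs in $T$ coming from the distinct-sum representations of $b_1$; the resulting cap on $\sum T$ falls well short of $2b_1 - 3$, contradicting $\sigma_1 \geq 2b_1$.

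For case (b), $b_2 = 3b_1 + 4$, one may assume $b_1 \in \{4, 7, 8\} \cup \{b : b \geq 11\}$ by case (a), and the dichotomy pins $A_1$ down to a short explicit list ($\{1, 2\}$, $\{1, 2, 3\}$, $\{1, 2, 4\}$ for $b_1 = 4, 7, 8$ respectively; a similar list for $b_1 \geq 11$). Setting $A_2 = A \cap [1, b_2 - 1]$, the requirement $P(A_2) \supseteq [b_1 + 1, 3b_1 + 3]$ together with $b_2 \notin P(A_2)$ is analyzed by examining which elements in $[b_1 + 1, b_2 - 1]$ can belong to $A$: each such element helps fill the gap above $b_1$ through sums with $A_1$, but the tight relation $b_2 - b_1 = 2b_1 + 4$ forces any sufficient collection of them, combined with $A_1$, to realize $b_2$ as a subset sum. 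Cases (c) and (d) reduce to direct enumeration: for $b_1 = 1$, $b_2 = 9$ the exclusion $1 \notin A$ together with $2, 3, 4 \in P(A)$ forces $\{2, 3, 4\} \subseteq A$ (since sums of distinct elements all $\geq 2$ cannot hit $3$ or $4$), yielding $9 = 2 + 3 + 4 \in P(A)$; for $b_1 = 2$, $b_2 = 15$ one has $1 \in A$, $3 \in A$, and a short branching argument on the next few elements of $A$ shows any way of covering $[4, 14] \subseteq P(A)$ also produces $15$ as a subset sum.

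The main obstacle is case (b): one needs a uniform argument for $b_1 \geq 11$, not merely separate analyses at the exceptional values $b_1 \in \{4, 7, 8\}$. The heart of the difficulty is tracking simultaneously how elements of $A_2 \setminus A_1$ must cover $[b_1 + 1, 3b_1 + 3]$ as subset sums together with $A_1$, and how those same elements inevitably combine to hit $b_2 = 3b_1 + 4$; keeping the case split finite and controllable as $b_1$ grows is the crux of the proof.
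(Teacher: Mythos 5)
First, note that this statement is not proved in the paper at all: it is quoted verbatim from Chen and Fang \cite{ChenFang} (their Theorem 2), and the only piece of that machinery reproduced here is Lemma \ref{lem:2.2}, which already gives the key structural fact that $P(A\cap[1,b_1-1])=[0,b_1-1]$ \emph{exactly}, with the greedy recursion $c_1=1$, $c_2=3$, $c_{i+1}=c_i+a_{i+1}$ and $a_{i+1}\le c_i+1$. Your symmetry dichotomy ($\sigma_1=b_1-1$ or $\sigma_1\ge 2b_1$) is correct but strictly weaker than this: the second branch is in fact vacuous, and carrying it costs you extra work --- indeed your claim that the ``trivial bound'' $\sigma_1\le b_1(b_1-1)/2$ kills the branch $\sigma_1\ge 2b_1$ is arithmetically false for $b_1=5$ ($10\ge 10$) and $b_1=6$ ($15\ge 12$), so those subcases still need the enumeration. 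More importantly, the recursion in Lemma \ref{lem:2.2} is precisely what makes the set of attainable values of $b_1$ (and then of $b_2$) a finitely describable object; without it your case (b) has no mechanism for ``keeping the case split finite and controllable as $b_1$ grows.'' Cases (a), (c), (d) of your sketch are correct and verifiable (the branching for $b_1=2$, $b_2=15$ does terminate quickly), so that part of the plan is sound.

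The genuine gap is case (b), $b_2=3b_1+4$, which is the substantive content of the theorem. There you (i) incorrectly discard $b_1\in\{1,2\}$: case (a) only removes $b_1\in\{3,5,6,9,10\}$, so $b_1=1$ (hence $b_2=7$) and $b_1=2$ (hence $b_2=10$) remain and are not subsumed by sub-cases (c), (d), which concern $b_2=9$ and $b_2=15$ respectively; and (ii) for $b_1\in\{4,7,8\}\cup[11,\infty)$ you assert that the elements of $A\cap(b_1,b_2)$ needed to cover $[b_1+1,3b_1+3]$ ``inevitably combine to hit $b_2=3b_1+4$,'' which is exactly the statement to be proved, not an argument for it. The missing idea is to iterate the interval structure past $b_1$: writing $A\cap[1,b_2-1]=\{a_1<\cdots<a_k<a_{k+1}<\cdots<a_\ell\}$ with $P(\{a_1,\dots,a_k\})=[0,b_1-1]$ and $a_{k+1}\ge b_1+1$, one shows $P(\{a_1,\dots,a_j\})$ for $j>k$ is again a union of at most two intervals whose endpoints obey a forced recursion, and the requirement that $[b_1+1,3b_1+3]\subseteq P$ while $3b_1+4\notin P$ contradicts that recursion uniformly in $b_1$. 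As written, your proposal identifies this as ``the crux'' but does not supply it, so the proof is incomplete at its central point.
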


Later, Chen and Wu \cite{ChenWu} further improved this result. By observing Chen and Fang's results we know that the critical value of $b_2$ is $3b_1+5$. In this paper, we study the problem of critical values of $b_k$. We call this problem critical values of Burr's problem.

In 2019, Fang and Fang \cite{FangFang2019} considered the critical value of $b_3$ and proved the following result.
\begin{theorem}\cite[Theorem 1.1]{FangFang2019} If $A$ and $B=\{1<b_1<b_2<\cdots\}$ are two infinite sequences of positive integers with $b_2=3b_1+5$ such that $P(A)=\mathbb{N}\setminus B$, then $b_3\ge4b_1+6$. Furthermore, there exist two infinite sequences of positive integers $A$ and $B=\{1<b_1<b_2<\cdots\}$ with $b_2=3b_1+5$ and $b_3=4b_1+6$ such that $P(A)=\mathbb{N}\setminus B$.
\end{theorem}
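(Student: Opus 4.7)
The theorem has two parts: a lower bound $b_3 \ge 4b_1+6$, and a matching construction. I will attack the lower bound by contradiction, and the construction by exhibiting an explicit sequence $A$.

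\emph{Lower bound.} Assume for contradiction that $b_3 \le 4b_1+5$, and write $A=\{a_1<a_2<\cdots\}$. Since $1\notin P(A)$ but $[2,b_1-1]\subseteq P(A)$, a direct induction forces the smallest elements of $A$: one has $a_1=2$, and each subsequent integer $n\in[3,b_1-1]$ that cannot be realised as a subset sum of $\{a_1,\ldots,a_{j-1}\}$ (because $1\notin A$ blocks the obvious representation $n=1+(n-1)$) must itself enter $A$; so $a_2=3$, $a_3=4$, etc. Running this analysis far enough determines $A\cap[2,b_1-1]$ up to finitely many boundary cases such as $b_1\in\{4,7,8\}$, which I will treat separately. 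I then use $b_1\notin P(A)$ to restrict $A\cap[b_1+1,3b_1+4]$, and $b_2=3b_1+5\notin P(A)$ combined with $[b_1+1,b_2-1]\cup[b_2+1,b_3-1]\subseteq P(A)$ to pin down enough of $A$ below $b_3$ that one may produce a disjoint decomposition $b_3=s+t$ with $s\in P(A)\cap[2,b_1-1]$, $t\in P(A)\cap[b_1+1,b_3-1]$, and the subsets of $A$ realising $s$ and $t$ chosen disjoint. This yields $b_3\in P(A)$, contradicting $b_3\in B$.

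\emph{Construction.} I will build an initial block $A_0\subseteq[2,4b_1+5]$ satisfying
\[
P(A_0)\cap[0,4b_1+6]=[0,4b_1+6]\setminus\{1,b_1,3b_1+5,4b_1+6\},
\]
starting from $\{2,3,\ldots,b_1-1\}$ and modifying it (removing one or two carefully chosen small elements so that no subset sums to $b_1$, then inserting chosen elements in $[b_1+1,4b_1+5]$) so that every integer in $[0,4b_1+6]$ other than $1,b_1,3b_1+5,4b_1+6$ is a subset sum and those four are not. I then append a tail $\{t_1<t_2<\cdots\}$ with $t_1$ much larger than $\sum A_0$ and $t_{i+1}\ge 5t_i$; Theorem~1.1 (Hegyv\'ari) then guarantees that $A=A_0\cup\{t_i\}$ satisfies $P(A)=\mathbb{N}\setminus\{1,b_1,3b_1+5,4b_1+6,t_1,t_2,\ldots\}$, and setting $b_{i+3}=t_i$ gives the required $B$.

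The main obstacle is the lower bound: choosing the decomposition $b_3=s+t$ so that the subsets of $A$ realising $s$ and $t$ may be taken disjoint requires detailed bookkeeping of $A\cap[2,b_3-1]$, and the correct $(s,t)$ will depend on the exact value of $b_3\in[3b_1+6,4b_1+5]$ and on the small structural features of $A$; a short but delicate case analysis, together with separate treatment of the small admissible values of $b_1$, is to be expected.
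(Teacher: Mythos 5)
The construction half fails outright. You place the tail $\{t_1<t_2<\cdots\}$ inside $A$ and simultaneously declare $b_{i+3}=t_i\in B$; but $t_i\in A$ forces $t_i\in P(A)=\mathbb{N}\setminus B$, a contradiction. Independently, taking $t_1$ much larger than $\sum A_0$ makes $\sum A_0+1\notin P(A)$ (a subset sum either avoids the tail, hence is at most $\sum A_0$, or uses some $t_i$, hence is at least $t_1$), so $\sum A_0+1$ becomes an undeclared element of $B$. Hegyv\'ari's theorem cannot repair this: it asserts the existence of \emph{some} $A$ for a set $B$ whose least element exceeds $x_0$, whereas your $B$ contains $1$, $b_1$, $3b_1+5$, $4b_1+6$. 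There is also a problem before the tail is reached: $P(A_0)$ is symmetric about $\tfrac12\sum A_0$, so prescribing $P(A_0)\cap[0,4b_1+6]$ says nothing about the forced gaps at $\sum A_0-1$, $\sum A_0-b_1$, etc., which must themselves lie in $B$ or be filled later. The construction that works (compare Lemma \ref{lem:2.1} of this paper and the Chen--Fang construction) adds elements one at a time with $a_{n+1}\le 1+\sum_{i\le n}a_i$, so that at every stage $P(\{a_1,\dots,a_n\})$ equals $[0,\sum_{i\le n}a_i]$ minus exactly the prescribed elements of $B$ in that range; a tail separated from $A_0$ by a large gap can never be repaired afterwards.

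For the lower bound you have a strategy, not a proof. Two specific points. First, $A\cap[2,b_1-1]$ is \emph{not} determined by the hypotheses (after $\{2,3,4\}$, with $P=[0,9]\setminus\{1,8\}$, the next element may be $5$, $6$, $7$ or $8$, among others); what is forced is the invariant $P(\{a_1,\dots,a_j\})=[0,c_j]\setminus\{1,c_j-1\}$ for a suitable subsequence of indices, the analogue of Lemma \ref{lem:2.2}, and even establishing that requires an argument of the kind carried out in Lemma \ref{lem:2.3} (ruling out large jumps by using that $b_1$ and $b_2$ are fixed finite numbers). Second, the entire content of the theorem sits in your deferred claim that whenever $b_3\le 4b_1+5$ one can write $b_3=s+t$ with $s,t$ realized by disjoint subsets of $A$: you neither identify the decomposition nor show the supports can be made disjoint, and it is exactly here that the hypothesis $b_2=3b_1+5$ must enter (for larger $b_2$ the correct bound changes, cf.\ Theorem \ref{thm:1.6}). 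As it stands, the proposal records the statement to be proved plus the promise of ``a short but delicate case analysis''; that case analysis is the proof.
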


Recently, Fang and Fang \cite{FangFang2020} introduced the following definition. For given positive integers $b$ and $k\ge3$, define $c_{k}(b)$ successively as follows:

(i) let $c_k=c_{k}(b)$ be the least integer $r$ for which, there exist two infinite sets of positive integers $A$ and $B=\{b_1<b_2<\cdots<b_{k-1}<b_{k}<\cdots\}$ with $b_1=b$, $b_2=3b+5$ and $b_i=c_i(3\le i<k)$ and $b_k=r$ such that $P(A)=\mathbb{N}\setminus B$ and $a\le \sum_{\substack{a'<a\\a'\in A}}a'+1$ for all $a\in A$ with $a>b+1$;

(ii) if such $A,B$ do not exist, define $c_k=+\infty$.

In \cite{FangFang2020}, Fang and Fang proved the following result.
\begin{theorem}\cite[Theorem 1.1]{FangFang2020} For given positive integer $b\in\{1,2,4,7,8\}\cup\{b':b'\ge 11,b'\in\mathbb{N}\}$, we have
$$c_{2k-1}=(3b+6)(k-1)+b,~~c_{2k}=(3b+6)(k-1)+3b+5,~~k=1,2,\dots.$$
\end{theorem}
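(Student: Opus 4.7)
The plan is to induct on $k$, establishing the two formulas for $c_{2j-1}$ and $c_{2j}$ in parallel. The base cases $k=1$ and $k=2$ are part of the definition of $c_k(b)$, and the first nontrivial case $c_3(b)=4b+6$ is handled by an argument in the spirit of Fang--Fang's Theorem 1.5. The restriction $b\in\{1,2,4,7,8\}\cup\{b':b'\ge11\}$ is needed to ensure that the construction can be initiated at all, in line with Chen--Fang's Theorem 1.4. For the inductive step, assuming the formulas up to index $k-1$ so that $b_1,\ldots,b_{k-1}$ are fixed at the stated critical values, I would determine $c_k$ by proving matching lower and upper bounds.

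For the lower bound, write $A=\{a_1<a_2<\cdots\}$ and $s_n=a_1+\cdots+a_n$. The admissibility condition gives $a_{n+1}\le s_n+1$ whenever $a_{n+1}>b+1$, and together with $P(A)=\mathbb{N}\setminus B$ this forces $P(\{a_1,\ldots,a_n\})=[0,s_n]\setminus(B\cap[0,s_n])$ for all sufficiently large $n$. I would then track $s_n$ step by step: each new $a_{n+1}$ must not equal any $b_j$, and the translates $a_{n+1}+P(\{a_1,\ldots,a_n\})$ must cover every integer in $(s_n,s_{n+1}]$ that does not lie in $B$, while covering no integer that does. Propagating these constraints up to the first $n$ with $s_n>b_{k-1}$ pins down a sharp lower bound on $b_k$; the alternating gaps $c_{2j-1}-c_{2j-2}=b+1$ and $c_{2j}-c_{2j-1}=2b+5$ then fall out of the two parities of the inductive step.

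For the construction I would exhibit $A$ explicitly, exploiting the fact that $c_{k+2}-c_k=3b+6$, so that the critical $B$ is eventually periodic with period $3b+6$. After a finite initial segment (chosen as in Chen--Fang and Fang--Fang), $A$ is built in blocks: each block of length $3b+6$ in the integers calls for a small, fixed number of new elements of $A$ whose subset sums, combined with those generated by previous blocks, cover precisely the non-$B$ integers in that block while keeping the next two $b_j$'s out of $P(A)$. The main obstacle will be the lower bound, where one must rule out every configuration of $A$, not merely the greedy one, in which $b_k$ would fall below the claimed value. I expect the key combinatorial lemma to come from a careful analysis near $s_n=b_{k-1}$: keeping $b_{k-1}\notin P(A)$ while covering $b_{k-1}-1$ and $b_{k-1}+1$ forces $a_{n+1}$ into a narrow window, which in turn dictates the exact value of the next forbidden integer and hence of $c_k$.
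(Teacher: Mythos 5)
First, note that this statement is quoted from Fang and Fang (2020) and is not proved in the present paper; what the paper does prove (Section 2) is the generalization with $e_k(u,v)$, which recovers this result when $v=3u+5$ (so that $v+1=3b+6$), and it does so by the same method. Your overall plan---matching a periodic construction against a forced lower bound, by induction on $k$---is exactly the architecture of that proof: your block construction with period $3b+6$ is the paper's Lemma 2.1 (one new element $a_{s+n}=(v+1)n$ per block after a hand-built initial segment), and your inductive lower bound corresponds to Lemmas 2.3--2.5.

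There is, however, a genuine gap in your lower-bound argument. You assert that the admissibility condition $a_{n+1}\le s_n+1$ together with $P(A)=\mathbb{N}\setminus B$ forces $P(\{a_1,\dots,a_n\})=[0,s_n]\setminus(B\cap[0,s_n])$ for all sufficiently large $n$. This does not follow and is false in general: for $x$ with $a_{n+1}\le x\le s_n$, membership $x\in P(A)$ can be achieved using elements beyond $a_n$, so $P(\{a_1,\dots,a_n\})$ may have ``temporary'' holes inside $[0,s_n]$ that are not in $B$ and get filled only at a later stage. The paper's Lemma 2.3 is devoted precisely to this issue: one only gets the clean identity $P(\{a_1,\dots,a_{k+t}\})=[0,\Sigma_t+2u]\setminus\{u,\Sigma_t+u\}$ along a carefully chosen subsequence of indices, after a case analysis distinguishing $a_{k+t}>\Sigma_{t-1}+u$ from $a_{k+t}=\Sigma_{t-1}$ (the latter producing an extra hole at $a_{k+t}+u$ that may or may not survive). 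Your sketch also stops short of the key structural fact that makes the induction close: beyond the initial segment realizing $[0,d_1+d_2]\setminus\{d_1,d_2\}$, every relevant element of $A$ is forced to be a multiple of $3b+6$ (of $v+1$ in the generalization), which is what rigidly propagates the hole pattern and yields the exact values $c_{2k-1}$ and $c_{2k}$ rather than merely an inequality. ``A narrow window for $a_{n+1}$'' is the right instinct, but without the divisibility conclusion (the paper's Lemma 2.4) the inductive step does not pin down $b_k$. So the approach is the right one, but these two points are where the actual proof lives and your proposal currently takes them for granted.
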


Naturally, we consider the problem that for any integer $b_1$ and $b_2\ge 3b_1+5$ if we can determine the critical value of $b_3$, instead of $b_2=3b_1+5$. This problem is posed by Fang and Fang in \cite{FangFang2019}. Recently, authors \cite{WuYan} answered this problem.

\begin{theorem}\cite{WuYan} \label{thm:1.6}If $A$ and $B=\{b_1<b_2<\cdots\}$ are two infinite sequences of positive integers with $b_2\ge 3b_1+5$ such that $P(A)=\mathbb{N}\setminus B$, then $b_3\ge b_2+b_1+1$.
\end{theorem}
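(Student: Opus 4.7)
The approach is to argue by contradiction: assume $b_3\le b_1+b_2$, and show that every integer in $\{b_2+1,\dots,b_1+b_2\}$ lies in $P(A)$, contradicting $b_3\in B$. Write $A'=A\cap[1,b_2-1]$. Since subset sums strictly below $b_2$ cannot involve elements of $A$ that are $\ge b_2$, and since $b_2\notin A$, we have $P(A')\cap[0,b_2]=[0,b_2]\setminus\{b_1,b_2\}$. A routine interval-extension induction (as in the proofs of \cite{ChenFang,FangFang2019}) yields $\sum(A\cap[1,b_1-1])=b_1-1$ and $b_1+1\in A$. Let $a^{*}=\min\{a\in A:a>b_2\}$ (which exists because $A$ is infinite).

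First I would dispose of the easy cases where $a^{*}\le b_3-1$. If $a^{*}=b_2+1$, then for each $k\in\{1,\dots,b_1\}$, $k-1\in[0,b_1-1]\subseteq P(A')$, and adjoining $a^{*}\notin A'$ to a witness of $k-1$ produces a subset of $A$ summing to $b_2+k=a^{*}+(k-1)$. If $b_2+2\le a^{*}\le b_3-1$, set $j=a^{*}-b_2\in[2,b_1-1]$: for $k<j$, any witness of $b_2+k\in P(A)$ must lie in $A\cap[1,b_2+k]\subseteq A'$ because $b_2+k<a^{*}$, so $b_2+k\in P(A')$; for $k\ge j$, write $b_2+k=a^{*}+(k-j)$ with $k-j\in[0,b_1-j]\subseteq P(A')$ and conclude as before. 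Either way, $\{b_2+1,\dots,b_1+b_2\}\subseteq P(A)$, which is the desired contradiction.

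The crux is \emph{Case 3: $a^{*}\ge b_3+1$}. Here $A\cap[1,b_3-1]=A'$, so $P(A')\supseteq[0,b_3-1]\setminus\{b_1,b_2\}$ and $b_3\notin P(A')$. Set $s=\sum A'$; the main tool is the subset-sum symmetry $k\in P(A')\Leftrightarrow s-k\in P(A')$. When $s=b_3-1$, symmetry forces $s-b_1\notin P(A')$, but using $b_2\ge 3b_1+5$ one checks $s-b_1=b_3-1-b_1\in[b_1+1,b_2-1]\subseteq P(A')$, a contradiction. When $s\ge b_3+1$ and $s\le 2b_3-1$, the two symmetric ``end intervals'' overlap and force $P(A')=[0,s]\setminus\{b_1,b_2,s-b_1,s-b_2\}$; the requirement that $b_3\in B\cap[0,s]$ be disjoint from $P(A')$ then yields $s-b_3\in\{b_1,b_2\}$, so $s\in\{b_1+b_3,b_2+b_3\}$. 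The sub-case $s=b_1+b_3$ is eliminated exactly like the $s=b_3-1$ case, now via $s-b_2=b_1+b_3-b_2\in[b_1+1,2b_1]\subseteq[b_1+1,b_2-1]\subseteq P(A')$ contradicting symmetry.

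The main obstacle is the remaining sub-case $s=b_2+b_3$, together with the range $s\ge 2b_3$. In these regimes the prescribed symmetric missing set of $P(A')$ is numerically consistent with $|A'|$, so the contradiction must come from combinatorial rigidity rather than counting. I would exploit the already-established inclusions $\{1,2,\dots\}\cap[1,b_1-1]\subseteq A'$ and $b_1+1\in A'$, the further elements forced into $A'$ by the need to generate $[b_1+2,b_2-1]$, and the bound $\max A'\le b_2-1$, to show that no subset of $[1,b_2-1]$ can realize the prescribed symmetric gap structure near $s-b_1$ and in the middle band $[b_3,s-b_3]$. This yields the final contradiction and completes the proof of $b_3\ge b_1+b_2+1$.
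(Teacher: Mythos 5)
This theorem is quoted from \cite{WuYan}; the present paper contains no proof of it, so your attempt can only be judged on its own terms. The reduction to $A'=A\cap[1,b_2-1]$, the disposal of the cases $a^{*}\le b_3-1$, and the symmetry bookkeeping in the sub-cases $s=b_3-1$, $b_3+1\le s\le 2b_3-1$ with $s=b_1+b_3$ are all correct and verifiable. But the argument is not complete: the final paragraph, covering the sub-case $s=b_2+b_3$ and the whole range $s\ge 2b_3$, is a statement of intent (``I would exploit \dots\ to show that no subset of $[1,b_2-1]$ can realize the prescribed symmetric gap structure''), not a proof. These are precisely the configurations where counting and end-interval symmetry give no contradiction --- e.g.\ for $b_1=11$, $b_2=38$, $b_3=49$, $s=87$ one must rule out every $A'\subseteq[1,37]$ with $P(A')=[0,87]\setminus\{11,38,49,76\}$, and nothing in your sketch actually does this; when $s\ge 2b_3$ you do not even control $P(A')$ on the middle band $(b_3,s-b_3)$, since sums there could a priori be missing for reasons unrelated to $b_1,b_2,b_3$. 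So the crux of the theorem is left unproved. A minor additional omission: your case list for $s$ skips $s\le b_3-2$, which can occur when $b_3=b_2+1$ (then $s=b_2-1$ is consistent with the end-interval containment); it is easily killed by symmetry ($s-b_1=b_2-1-b_1\in[b_1+1,b_2-1]$ would have to equal $b_1$, contradicting $b_2\ge 3b_1+5$), but it should be stated.

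On approach: judging from Lemma~\ref{lem:2.3} of this paper and from \cite{ChenFang,FangFang2019}, the published proofs of such statements do not work with the global set $A\cap[1,b_2-1]$ and reflection about $\tfrac{s}{2}$; they scan the elements of $A$ in increasing order and show inductively that each initial segment has subset-sum set of the form $[0,\Sigma]\setminus\{b_1,\Sigma-b_1\}$ until $\Sigma-b_1$ is forced to equal $b_2$, after which the next element is pinned down tightly enough to force $b_3\ge b_1+b_2+1$. That local, element-by-element rigidity is exactly the ingredient your plan defers to ``combinatorial rigidity'' without supplying; until the sub-cases $s=b_2+b_3$ and $s\ge 2b_3$ are argued in this (or some other) concrete way, the proof has a genuine gap.
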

\begin{theorem}\cite{WuYan} \label{thm:1.7}For any positive integers $b_1\in\{4,7,8\}\cup[11,+\infty)$ and $b_2\ge 3b_1+5$, there exists two infinite sequences of positive integers $A$ and $B=\{b_1<b_2<\cdots\}$ with $b_3=b_2+b_1+1$ such that $P(A)=\mathbb{N}\setminus B$.
\end{theorem}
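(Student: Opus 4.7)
The plan is constructive. Given $b_1\in\{4,7,8\}\cup[11,+\infty)$ and $b_2\ge 3b_1+5$, we set $b_3=b_2+b_1+1$ and build $A=\bigcup_{k\ge 2}A^{(k)}$ as an increasing union of finite sets, taking $B=\mathbb{N}\setminus P(A)$. The construction will guarantee that $B$ is infinite, strictly increasing, and begins with exactly $b_1,b_2,b_3$, as required.

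The heart of the argument is producing a base set $A^{(2)}\subset\mathbb{Z}_{>0}$ with $\sum_{a\in A^{(2)}}a=b_1+b_2$ and $P(A^{(2)})=[0,b_1+b_2]\setminus\{b_1,b_2\}$. We decompose $A^{(2)}=A_1\cup A_2$ disjointly, where $A_1\subseteq[1,b_1-1]$ satisfies $\sum A_1=b_1-1$ and $P(A_1)=[0,b_1-1]$, and $A_2\subseteq[b_1+1,b_1+b_2]$ satisfies $\sum A_2=b_2+1$. The hypothesis $b_1\in\{4,7,8\}\cup[11,+\infty)$ is exactly what is needed for $A_1$ to exist: we check that $\{1,2\}$, $\{1,2,3\}$, $\{1,2,4\}$ serve for $b_1=4,7,8$ respectively, while for $b_1\ge 11$ a triangular-plus-remainder construction produces a suitable $A_1$. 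For $A_2$ we take a near-consecutive block starting at $b_1+1$, adjusting one element so that $\sum A_2=b_2+1$ and so that no subset sum of $A_1\cup A_2$ equals $b_2$; the symmetric structure (missing set symmetric about $(b_1+b_2)/2$) makes the existence natural.

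With $A^{(2)}$ in hand, set $A^{(3)}=A^{(2)}\cup\{b_2+1\}$. The key identity $(b_2+1)+b_1=b_3$ yields $P(A^{(3)})=[0,b_1+2b_2+1]\setminus\{b_1,b_2,b_3,2b_2+1\}$ by a short direct calculation, so in particular $b_3\notin P(A^{(3)})$. For $k\ge 3$ we extend by $A^{(k+1)}=A^{(k)}\cup\{s_k+1\}$, where $s_k=\sum_{a\in A^{(k)}}a$. An induction on $k$ shows that $P(A^{(k+1)})=P(A^{(k)})\cup(s_k+1+P(A^{(k)}))$ doubles the covered range, and the missing set $M_k$ evolves as $M_{k+1}=M_k\cup(s_k+1+M_k)$ while remaining symmetric about $s_k/2$. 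Hence $B=\bigcup_k M_k$ is a legitimate infinite increasing sequence whose first three entries are $b_1,b_2,b_3$, and $P(A)=\mathbb{N}\setminus B$ as required.

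The main obstacle is the construction of $A_2$ in Step~1. We must simultaneously arrange (a) the correct total sum $b_2+1$; (b) enough density of $P(A_2)$ so that the Minkowski sum $P(A_1)+P(A_2)$ covers all of $[0,b_1+b_2]\setminus\{b_1,b_2\}$; and (c) no subset sum of $A_1\cup A_2$ landing on $b_2$. The condition $b_2\ge 3b_1+5$ is essential here: a smaller $b_2$ would force some subset sum of $A_2$ into the forbidden interval $[b_2-b_1+1,b_2]$. A case analysis by the residue class of $b_2+1$ modulo suitable small integers, together with the direct handling of $b_1\in\{4,7,8\}$, is expected to carry the construction through; once $A^{(2)}$ is built, the inductive extension in Steps~2 and~3 is essentially routine.
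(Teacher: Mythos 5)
Your overall strategy coincides with the paper's (this theorem is quoted from \cite{WuYan}, but the construction reappears here as Lemma \ref{lem:2.1}): take $A_1\subseteq[1,b_1-1]$ with $P(A_1)=[0,b_1-1]$, adjoin a near\nobreakdash-consecutive block starting at $b_1+1$ whose total is $b_2+1$ to reach $P=[0,b_1+b_2]\setminus\{b_1,b_2\}$, then adjoin $b_2+1$ so that the next hole is exactly $b_1+b_2+1=b_3$. Your observation that a subset summing to $b_2$ is automatically excluded (its complement would sum to $b_1$, impossible since $\sum A_1=b_1-1$ and $\min A_2>b_1$) is correct and is exactly the symmetry the paper exploits. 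The only genuine divergence is the tail: the paper continues with $a_{s+n}=(b_2+1)n$, producing the arithmetic-progression-like complement needed for Theorem \ref{thm:1.1}, whereas you double via $s_k+1$; since Theorem \ref{thm:1.7} places no constraint on $b_4,b_5,\dots$, your faster-growing tail is equally valid and slightly simpler to verify.

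The one step that fails as literally stated is ``adjusting one element so that $\sum A_2=b_2+1$.'' Writing $b_2+1=(b_1+1)+\cdots+(b_1+s)+r$ with $0\le r\le b_1+s$, dumping all of $r$ onto a single block element can create a gap larger than $b_1$ between consecutive elements of $A_2$, which punches extra holes into $P(A_1\cup A_2)$ strictly below $b_2$. Concretely, for $b_1=4$, $b_2=24$ one has $25=5+6+7+7$, and $A_2=\{5,6,14\}$ gives $P(\{1,2,5,6,14\})=[0,28]\setminus\{4,10,18,24\}$, so $B$ would begin $4,10,\dots$ rather than $4,24,\dots$. The repair is exactly the paper's: split $r=r_2+\cdots+r_s+\varepsilon(r)$ with each $r_t\le b_1-1$ (and the small reshuffle ensuring $r_j-r_{j-1}\le b_1-2$), so every consecutive gap in the block stays at most $b_1$; this decomposition is available precisely because $b_2\ge 3b_1+5$ forces $s\ge3$. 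With that substitution your argument goes through.
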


In this paper, we go on to consider the critical value of $b_k$ for any integers $b_1$ and $b_2\ge 3b_1+5$. Motivated by the definition of Fang and Fang, we also introduce the following definition. For given positive integers $u$, $v\ge 3u+5$ and $k\ge3$, let $e_1=u$, $e_2=v$ and $e_k=e_k(u,v)$ be the least integer $r$ for which there exist two infinite sets of positive integer $A$ and $B=\{b_1<b_2<\cdots<b_{k-1}<b_k<\cdots\}$ with $b_i=e_i(1\le i<k)$ and $e_k=r$ such that $P(A)=\mathbb{N}\setminus B$ and $a\le \sum_{\substack{a'<a\\a'\in A}}a'+1$ for all $a\in A$ with $a>u+1$. If such sets $A,B$ do not exist, define $e_k=+\infty$.

In this paper, we obtain the following results.
\begin{theorem}\label{thm:1.1} For given positive integers $u\in\{4,7,8\}\cup\{u:u\ge11\}$, $v\ge 3u+5$, we have
\begin{equation}\label{eq:c}
e_{2k+1}=(v+1)k+u,~~e_{2k+2}=(v+1)k+v,~~k=0,1,\dots.
\end{equation}
\end{theorem}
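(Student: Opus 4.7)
I would prove \eqref{eq:c} by induction on $k$, simultaneously establishing the lower bounds $e_{2k+1}\ge(v+1)k+u$, $e_{2k+2}\ge(v+1)k+v$ and constructing a concrete pair $(A,B)$ that realizes equality. The base $k=0$ is just the definitional identity $e_1=u$, $e_2=v$; the value $e_3=u+v+1$ is delivered by Theorems~\ref{thm:1.6}--\ref{thm:1.7}; and the case $e_4=2v+1$, together with all larger indices, is handled uniformly by the inductive mechanism below.

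\textbf{Lower bound.} Fix a pair $(A,B)$ with $b_i=e_i$ for $i<j$, and write $A=\{a_1<a_2<\cdots\}$ with partial sums $S_n=a_1+\cdots+a_n$. The crucial tool is an ``interval lemma'' in the style of Fang--Fang \cite{FangFang2020}: once $n$ is large enough that $a_n>u+1$, the density hypothesis $a_i\le S_{i-1}+1$ propagates by induction to give $P(\{a_1,\ldots,a_n\})=[0,S_n]\setminus(B\cap[0,S_n])$. Indeed, $P(\{a_1,\ldots,a_n\})=P(\{a_1,\ldots,a_{n-1}\})\cup\bigl(a_n+P(\{a_1,\ldots,a_{n-1}\})\bigr)$; since $a_n\le S_{n-1}+1$ the two translates overlap and their union is an interval apart from holes, and a point $b\in B\cap[0,S_n]$ can remain a hole only when $b<a_n$ or $b-a_n\in B$. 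Imposing this $B$-closure condition on the prescribed set $B=\{e_1,e_2,\ldots\}$, combined with the requirement that every $m\le S_n$ not in $B$ must be a subset sum, yields a recursion on $S_n$ and $a_n$ whose smallest solution is precisely $b_{2k+1}=(v+1)k+u$, $b_{2k+2}=(v+1)k+v$.

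\textbf{Construction.} For the matching upper bound I would extend the sequence of Theorem~\ref{thm:1.7} recursively: given a valid $(A,B)$ that realizes $e_1,\ldots,e_{2k}$, append to $A$ two carefully chosen elements lying in the window $\bigl((v+1)(k-1),(v+1)k\bigr]$ (typically one of the form $(v+1)(k-1)+u+1$ together with the largest admissible element of that block). These new elements advance the top of the already-covered subset-sum interval by exactly $v+1$, create exactly the two new holes $(v+1)k+u$ and $(v+1)k+v$, and preserve the density condition. The verification that no spurious holes or extra sums appear reduces, after bookkeeping, to the $B$-closure identity already verified in the lower-bound step.

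\textbf{Main obstacle.} The principal difficulty is the lower-bound argument in the presence of atypical small elements of $A$ inside $[1,u+1]$, where the density hypothesis imposes no control. One must show that, regardless of how the small initial segment of $A$ is chosen, its subset sums are already forced into the canonical pattern $[0,S]\setminus B$ for the appropriate threshold $S$. This case analysis is exactly where the restriction $u\in\{4,7,8\}\cup\{u:u\ge11\}$ enters, mirroring the obstructions of Theorem~\ref{ChenFang2}; a careful treatment of the few admissible shapes of $A\cap[1,u+1]$ lets the inductive step go through uniformly.
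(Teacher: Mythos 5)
Your overall architecture (lower bound by induction plus a matching construction, with $e_3$ supplied by Theorems~\ref{thm:1.6}--\ref{thm:1.7}) matches the paper's, but the technical core is asserted rather than proved, and the one precise statement you do make is false as stated. Your ``interval lemma'' $P(\{a_1,\ldots,a_n\})=[0,S_n]\setminus(B\cap[0,S_n])$ does not hold step by step: since later elements $a_{n+1}\le S_n+1$ can still contribute sums below $S_n$, a finite initial segment may have \emph{temporary} holes not in $B$ that are filled only by subsequent elements. The paper's Lemma~\ref{lem:2.3} is devoted exactly to this: in the case $a_{k+2}+\cdots+a_{k+t_1-1}=a_{k+t_1}$ an extra hole $a_{k+t_1}+u\notin B$ appears and must be removed by $a_{k+t_1+1}$, and one must rule out the possibility that such repairs go on forever. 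Your phrase ``yields a recursion on $S_n$ and $a_n$ whose smallest solution is precisely $b_{2k+1}=(v+1)k+u$, $b_{2k+2}=(v+1)k+v$'' is where the entire content of the theorem lives, and it is not carried out. In particular you never isolate the key structural fact that drives the paper's lower bound (Lemma~\ref{lem:2.4}): beyond the initial block realizing $[0,u+v]\setminus\{u,v\}$, every element of $A$ up to the relevant index is forced to be a multiple of $v+1$; it is this divisibility, extracted from the two possibilities $(v+1)T+u=a_{m+1}+u$ or $=a_{m+1}+v$ and the exclusion of the latter via $v\ge 3u+5$, that pins the holes to the arithmetic-progression pattern.

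The construction half is also not sound as described. You propose appending \emph{two} elements per block, each lying in $\bigl((v+1)(k-1),(v+1)k\bigr]$, yet claim the top of the covered interval advances ``by exactly $v+1$''; two elements of that size advance the top by roughly $2(v+1)k$, so the description is internally inconsistent, and it is not checked that exactly the two holes $(v+1)k+u$ and $(v+1)k+v$ (and no others) are created. The paper's construction is simpler and genuinely different: after building an initial segment with $P=[0,u+v]\setminus\{u,v\}$ (which requires a small combinatorial decomposition of $v+1$ into a sum $(u+1)+\cdots+(u+s)+r$, not just an appeal to Theorem~\ref{thm:1.7}), it adjoins the single element $(v+1)n$ at step $n$; each such element creates a whole batch of new holes $d_{2n+1},\ldots,d_{n(n+1)+2}$ at once, all of which lie on the prescribed progression. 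Finally, you locate the ``principal difficulty'' in the small elements of $A$ below $u+1$; in fact that part is dispatched immediately by Chen--Fang's Lemma (Lemma~\ref{lem:2.2} here, forcing $P(\{a_1,\ldots,a_k\})=[0,u-1]$), and the restriction on $u$ enters only through the existence of such an initial block. The real difficulty is the forced structure of the large elements, which your sketch does not address.
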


\begin{corollary}\label{thm:1.2} For given positive integers $u\in\{4,7,8\}\cup\{u:u\ge11\}$, $v\ge 3u+5$ and $k\ge3$, we have
$$e_{k}=e_{k-1}+e_{k-2}-e_{k-3},$$
where $e_0=-1$, $e_1=u$, $e_2=v$.
\end{corollary}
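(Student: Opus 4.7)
The plan is to deduce the corollary as a direct consequence of Theorem \ref{thm:1.1} by parity casework on $k$. Since Theorem \ref{thm:1.1} supplies the closed-form expressions $e_{2k+1}=(v+1)k+u$ and $e_{2k+2}=(v+1)k+v$, every term of the recurrence is explicit, so what remains is an algebraic verification; there is no combinatorial or extremal ingredient to add beyond what Theorem \ref{thm:1.1} already proves.

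First I would dispose of the two smallest cases $k=3$ and $k=4$ by hand, because the definition $e_0=-1$ is a convention chosen precisely to make the recurrence work at the very start. For $k=3$, applying Theorem \ref{thm:1.1} with $k=1$ gives $e_3=u+v+1$, and one checks $e_2+e_1-e_0=v+u-(-1)=u+v+1$. For $k=4$, applying Theorem \ref{thm:1.1} with $k=1$ gives $e_4=2v+1$, and one checks $e_3+e_2-e_1=(u+v+1)+v-u=2v+1$.

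For $k\ge 5$ I would split according to parity. If $k=2m+1$ with $m\ge 2$, then $e_{k-1}=e_{2m}=(v+1)(m-1)+v$, $e_{k-2}=e_{2m-1}=(v+1)(m-1)+u$ and $e_{k-3}=e_{2m-2}=(v+1)(m-2)+v$; summing the first two and subtracting the third collapses the $v$'s and leaves $(v+1)m+u=e_{2m+1}$. If instead $k=2m+2$ with $m\ge 2$, then $e_{k-1}=(v+1)m+u$, $e_{k-2}=(v+1)(m-1)+v$ and $e_{k-3}=(v+1)(m-1)+u$; the $u$'s cancel and the same computation yields $(v+1)m+v=e_{2m+2}$.

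There is no real obstacle here: the only point needing care is making sure the two base cases, where $e_0=-1$ is invoked, are consistent with the general closed form, and then matching indices correctly in the two parity cases. Once those are set up, the recurrence is a one-line cancellation in each case.
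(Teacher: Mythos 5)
Your proof is correct and is essentially the paper's intended derivation: the authors state the corollary without proof as an immediate consequence of Theorem \ref{thm:1.1}, and your parity casework plus the hand-check of $k=3$ (where the convention $e_0=-1$ enters) and $k=4$ is exactly the routine algebraic verification that is being left to the reader.
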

If $u\in\{3,5,6,9,10\}$ or $u=1,v=9\ge 3u+5$ or $u=2, v=15\ge 3u+5$, by Theorem \ref{ChenFang2} we know that such sequence $A$ does not exist. So we only consider the case for $u\in\{4,7,8\}\cup\{u:u\ge11\}$. In fact, we find Corollary \ref{thm:1.2} first, but in the proof of Theorem \ref{thm:1.1} we follow Fang and Fang's method. Some of skills are similar to \cite{WuYan}. For the convenience of readers, we provide all the details of the proof.

\section{Proof of Theorem \ref{thm:1.1}}
For given positive integers $u\in\{4,7,8\}\cup\{u:u\ge11\}$ and $v\ge 3u+5$, we define
$$d_{2k+1}=(v+1)k+u,~~d_{2k+2}=(v+1)k+v,~~k=0,1,\dots.$$

\begin{lemma}\label{lem:2.1}
Given positive integers $u\in\{4,7,8\}\cup\{u:u\ge11\}$, $v\ge 3u+5$ and $k\ge3$. Then there exists
an infinite set $A$ of positive integers such that $P(A)=\mathbb{N}\setminus\{d_1,d_2,\dots\}$ and $a\le \sum_{\substack{a'<a\\a'\in A}}a'+1$ for all $a\in A$ with $a>u+1$.
\end{lemma}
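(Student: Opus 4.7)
The plan is constructive, in two stages.

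\textbf{Stage 1 (Base set).} The first step is to exhibit a finite $A_0 \subset [1, v]$ with
\[
P(A_0) = [0, v+u] \setminus \{u, v\}, \qquad \sum_{a \in A_0} a = v+u,
\]
satisfying the Burr growth condition $a \le \sum_{a' < a,\, a' \in A_0} a' + 1$ for every $a \in A_0$ with $a > u+1$. The construction branches on $u \in \{4, 7, 8\} \cup \{u \ge 11\}$ and on the size of $v$; in each case $A_0$ is built as a union of one or two short blocks of consecutive integers plus a single trailing element linear in $v$, tuned so the total is $v+u$ and so that $u$ and $v$ are the only gaps in the subset-sum interval. For example, when $u = 4$ one may take $A_0 = \{1, 2, 5, 6, v-10\}$ for $17 \le v \le 20$ and $A_0 = \{1, 2, 5, 6, 7, v-17\}$ for most $v \ge 25$, handling a small finite list of exceptional values of $v$ (where the trailing element would collide with the block) by hand. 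The other admissible $u$ are treated in the same spirit. The style of construction follows the base step in the proof of Theorem~\ref{thm:1.7} of \cite{WuYan}.

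\textbf{Stage 2 (Doubling extension).} Set $a_k := 2^{k-1}(v+1)$ for $k \ge 1$, $A_k := A_{k-1} \cup \{a_k\}$, and $A := A_0 \cup \{a_k : k \ge 1\}$. I will prove by induction on $k \ge 0$ that
\[
P(A_k) = [0, M_k] \setminus \{d_1, d_2, \ldots, d_{2^{k+1}}\}, \qquad M_k := u - 1 + 2^k(v+1).
\]
The base $k = 0$ is Stage 1. For the inductive step, note $a_k > v \ge \max A_0$ and $a_k = 2 a_{k-1} > a_{k-1}$, so $a_k \notin A_{k-1}$, whence $P(A_k) = P(A_{k-1}) \cup (a_k + P(A_{k-1}))$. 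The crucial arithmetic identity $a_k + d_i = d_{i + 2^k}$ for $1 \le i \le 2^k$ is immediate from $d_{2j+1} = u + j(v+1)$ and $d_{2j+2} = v + j(v+1)$. Since $a_k \le M_{k-1}$ (equivalently $u \ge 1$), the two shifted copies of $[0, M_{k-1}]$ overlap and together tile $[0, M_k]$, and a short case check on $n \in [0, M_k]$ identifies the combined exclusion set as exactly $\{d_1, \ldots, d_{2^{k+1}}\}$. Taking $k \to \infty$ gives $P(A) = \bigcup_k P(A_k) = \mathbb{N} \setminus \{d_1, d_2, \ldots\}$, because $M_k \to \infty$ and every $d_i$ is either outside $[0, M_k]$ or in the excluded list. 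The Burr growth condition on $a_k$ reduces to $2^{k-1}(v+1) \le u + 2^{k-1}(v+1)$, i.e.\ $0 \le u$, which is automatic.

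\textbf{Main obstacle.} Stage 2 is a clean doubling induction once the identity $a_k + d_i = d_{i+2^k}$ is noted. The real work lies in Stage 1: explicitly producing $A_0$ for every admissible pair $(u, v)$, particularly in the rigid cases $u \in \{4, 7, 8\}$, where a short finite list of exceptional values of $v$ must be handled individually before a uniform formula takes over. This is precisely where techniques from \cite{WuYan} are adapted.
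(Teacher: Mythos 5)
Your Stage 2 is correct, and it is a genuinely different route from the paper's. You extend by the doubling sequence $a_k=2^{k-1}(v+1)$, whereas the paper appends the arithmetic progression $a_{s+n}=n(v+1)$, $n=1,2,\dots$; both work for the same underlying reason, namely the translation identity $j(v+1)+d_i=d_{i+2j}$, and your bookkeeping ($M_k=u-1+2^k(v+1)$, exclusion set $\{d_1,\dots,d_{2^{k+1}}\}$, overlap of length $u$ between the two shifted copies, $d_{2^{k+1}+1}=M_k+1$) all checks out, as does the verification of the growth condition for the $a_k$. The doubling version excludes the $d_i$ in batches of size $2^k$ rather than the paper's batches of size $2k$, but the induction is equally clean either way; neither choice buys anything essential since the lemma only asserts existence.

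Stage 1, however, is a genuine gap as written: it is a plan, not a proof. You exhibit $A_0$ only for $u=4$ and a few ranges of $v$, and assert that "the other admissible $u$ are treated in the same spirit," with an unspecified finite list of exceptional $v$ to be "handled by hand." For general $u\ge 11$ and arbitrary $v\ge 3u+5$ this leaves the central construction unproved, and it is exactly here that the hypothesis $u\in\{4,7,8\}\cup[11,\infty)$ must enter. The paper closes this gap uniformly and without casework: it first invokes the Chen--Fang result that for such $u$ there is $A_1\subseteq[1,u-1]$ with $P(A_1)=[0,u-1]$, then writes $v+1=(u+1)+(u+2)+\cdots+(u+s)+r$ with $0\le r\le u+s$ (so $s\ge3$ because $v\ge 3u+5$), distributes the remainder as $r=r_2+\cdots+r_s+\varepsilon(r)$ with each $r_t\le u-1$ and consecutive differences $r_t-r_{t-1}\le u-2$, and sets $a_1=u+1$, $a_t=u+t+r_t$. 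The resulting chain satisfies $a_{t-1}<a_t\le a_{t-1}+u$, which forces
\[
P(A_1\cup\{a_1,\dots,a_s\})=[0,u+v]\setminus\{u,v\},
\]
with total sum $u+v$ and the growth condition built in. If you replace your ad hoc Stage 1 by this uniform decomposition of $v+1$ into nearly consecutive integers exceeding $u$ (or carry out your block construction for every admissible $u$ and $v$, which amounts to reproving the same thing), the proof is complete.
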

\begin{proof} Let $s$ and $r$ be nonnegative integers with
$$v+1=(u+1)+(u+2)+\cdots+(u+s)+r,~~0\le r\le u+s.$$
Since $v\ge 3u+5$, it follows that $s\ge3$. Note that $u\ge4$. Then there exist integers $r_2,\dots,r_s$ such that
\begin{equation}\label{eq:2.1}
r=r_2+\cdots+r_s+\varepsilon(r),~~0\le r_2\le\cdots\le r_s\le u-1,
\end{equation}
where $\varepsilon(r)=0$ if $r=0$, otherwise $\varepsilon(r)=1$. If there is an index $3\le j\le s$ such that $r_j-r_{j-1}=u-1$, we replace $r_j$ and $r_{j-1}$ by $r_j-1$ and $r_{j-1}+1$. Then \eqref{eq:2.1} still holds and $r_j-r_{j-1}\le u-2$ for any index $3\le j\le s$.

We cite a result in \cite{ChenFang} that there exists a set of positive integers
$A_1$ with $A_1\subseteq[0,u-1]$ such that
$$P(A_1)=[0,u-1].$$
Let
$$a_1=u+1,~~a_s=u+s+r_s+\varepsilon(r),~~a_{t}=u+t+r_t,~~2\le t\le s-1.$$
Then
$$a_{t-1}<a_{t}\le a_{t-1}+u,~~2\le t\le s$$
and so
$$P(A_1\cup\{a_1,\dots,a_s\})=[0,a_{2}+\cdots+a_{s}+2u]\setminus\{u,a_{2}+\cdots+a_{s}+u\}.$$
Since
$$a_{2}+\cdots+a_{s}+u=(u+2+r_2)+\cdots+(u+s+r_s+\varepsilon(r))+u=v,$$
it follows that
\begin{equation}\label{eq:2.2}
P(A_1\cup\{a_1,\dots,a_s\})=[0,u+v]\setminus\{u,v\}.
\end{equation}
Let $a_{s+n}=(v+1)n$ for $n=1,2,\dots$. We will take induction on $k\ge1$ to prove that
\begin{equation}\label{eq:2.3}
P(A_1\cup\{a_1,\dots,a_{s+k}\})=[0,\sum_{i=1}^k a_{s+i} +u+v]\setminus\{d_1,d_2,\dots,d_{2m-1},d_{2m}\},
\end{equation}
where $m=k(k+1)/2+1$.

By \eqref{eq:2.2}, it is clear that
$$P(A_1\cup\{a_1,\dots,a_{s+1}\})=[0,a_{s+1}+u+v]\setminus\{d_1,d_2,d_3,d_4\},$$
which implies that \eqref{eq:2.3} holds for $k=1$.

Assume that \eqref{eq:2.3} holds for some $k-1\ge1$, that is,
\begin{equation}\label{eq:2.4}
P(A_1\cup\{a_1,\dots,a_{s+k-1}\})=[0,\sum_{i=1}^{k-1} a_{s+i} +u+v]\setminus\{d_1,d_2,\dots,d_{2m'-1},d_{2m'}\},
\end{equation}
where $m'=k(k-1)/2+1$. Then
\begin{eqnarray*}
a_{s+k}+P(A_1\cup\{a_1,\dots,a_{s+k}\})
=[(v+1)k,\sum_{i=1}^{k} a_{s+i} +u+v]\setminus D,
\end{eqnarray*}
where
$$D=\left\{d_{2k+1},d_{2k+2},\dots,d_{k(k+1)+1},d_{k(k+1)+2}\right\}.$$
Since $d_{2k+1}\le d_{k(k-1)+3}=d_{2m'+1}$, it follows that
\begin{equation*}
P(A_1\cup\{a_1,\dots,a_{s+k}\})=[0,\sum_{i=1}^k a_{s+i} +u+v]\setminus\{d_1,d_2,\dots,d_{2m-1},d_{2m}\},
\end{equation*}
where $m=k(k+1)/2+1$, which implies that \eqref{eq:2.3} holds.
Let $A=A_1\cup\{a_1,a_2,\dots\}$. We know that such $A$ satisfies Lemma \ref{lem:2.1}. This completes the proof of Lemma \ref{lem:2.1}.
\end{proof}

\begin{lemma}\label{lem:2.2}\cite[Lemma 1]{ChenFang}.
Let $A=\{a_1<a_2 <\cdots\}$ and $B=\{b_1<b_2 <\cdots\}$ be two sequences of positive integers with $b_1>1$ such that $P(A)=\mathbb{N}\backslash B$. Let $a_k<b_1<a_{k+1}$. Then
$$P(\{a_1,\cdots,a_i\})=[0,c_i], ~~i=1,2,\cdots,k,$$
where $c_1=1$, $c_2=3$, $c_{i+1}=c_i+a_i+1~(1\leq i\leq k-1)$, $c_k=b_1-1$ and $c_i+1\geq a_i+1~(1 \leq i \leq k-1)$.
\end{lemma}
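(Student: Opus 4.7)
I would prove both the interval identity $P(\{a_1,\ldots,a_i\}) = [0, c_i]$ and the spacing constraint $a_{i+1} \le c_i + 1$ simultaneously by induction on $i$ in the range $1 \le i \le k$. For the base cases, since $b_1 > 1$ every integer $m$ with $1 \le m \le b_1 - 1$ lies in $P(A)$. Applied to $m=1$, this forces $a_1 = 1$ (so $c_1 = 1$); applied to $m = 2$, together with $a_1 = 1$, this forces $a_2 = 2$, yielding $P(\{a_1,a_2\}) = \{0,1,2,3\} = [0,3]$, so $c_2 = 3$.

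For the inductive step, assume $P(\{a_1,\ldots,a_i\}) = [0, c_i]$ for some $i$ with $i+1 \le k$. The crux is to show $a_{i+1} \le c_i + 1$. Because the $c_j$'s are strictly increasing (a consequence of $c_{j+1} = c_j + a_{j+1}$) and, as I will verify, $c_k = b_1 - 1$, we have $c_i + 1 \le b_1 - 1$, hence $c_i + 1 \notin B$ and thus $c_i + 1 \in P(A)$. Since $c_i + 1 \notin [0, c_i]$, any representation $c_i + 1 = \sum_{j \in J} a_j$ must involve some $j > i$; each summand is at most $c_i + 1 < b_1 < a_{k+1}$, so every such $j$ is $\le k$, giving $a_{i+1} \le a_j \le c_i + 1$. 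Adding $a_{i+1}$ then enlarges the contiguous interval: $P(\{a_1,\ldots,a_{i+1}\}) = [0, c_i] \cup [a_{i+1}, c_i + a_{i+1}] = [0, c_i + a_{i+1}]$, so $c_{i+1} = c_i + a_{i+1}$.

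To pin down the terminal value $c_k = b_1 - 1$, I would squeeze from both sides. On one hand, $b_1 \notin P(A) \supseteq P(\{a_1,\ldots, a_k\}) = [0, c_k]$ forces $c_k \le b_1 - 1$. On the other hand, $b_1 - 1 \in P(A)$ and any representation of $b_1 - 1$ uses summands bounded by $b_1 - 1 < a_{k+1}$, hence summands from $\{a_1,\ldots,a_k\}$; so $b_1 - 1 \in [0, c_k]$ and $c_k \ge b_1 - 1$.

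The main obstacle is really just the bookkeeping that justifies invoking "$c_i + 1 \in P(A)$" at every step of the induction; this reduces to the strict monotonicity $c_1 < c_2 < \cdots < c_k = b_1 - 1$, which is immediate from the recursive identity $c_{j+1} = c_j + a_{j+1}$ together with the terminal identification obtained in the squeeze argument. Beyond this, the induction is the standard greedy subset-sum observation that a contiguous interval $[0, c]$ is preserved by adjoining a new element of size at most $c + 1$.
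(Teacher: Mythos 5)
The paper itself gives no proof of this lemma --- it is imported verbatim (as ``\cite[Lemma 1]{ChenFang}'') --- so your attempt can only be measured against the standard argument, which in outline it reproduces faithfully: force $a_1=1$ and $a_2=2$ from $1,2\in P(A)$, induct by showing the next element cannot overshoot the interval already built, and squeeze $c_k$ against $b_1-1$ at the end. The one genuine gap is the circularity you yourself flag but do not actually break: to get $a_{i+1}\le c_i+1$ you need $c_i+1\in P(A)$, which you justify by $c_i+1\le b_1-1$, which you in turn justify by monotonicity together with $c_k=b_1-1$; but $c_k=b_1-1$ is only obtained \emph{after} the induction is complete, and the induction uses ``$c_i+1\in P(A)$'' at every step. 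The repair is short and must be made explicit. At stage $i<k$ one already knows $c_i\le b_1-1$, since $[0,c_i]=P(\{a_1,\dots,a_i\})\subseteq P(A)$ and $b_1\notin P(A)$. If $c_i=b_1-1$, then $a_{i+1}\le a_k<b_1=c_i+1$ forces $a_{i+1}\le c_i$, hence $b_1=c_i+1\in[a_{i+1},\,c_i+a_{i+1}]\subseteq P(\{a_1,\dots,a_{i+1}\})\subseteq P(A)$, a contradiction. So in fact $c_i+1\le b_1-1$ for every $i<k$, and $c_i+1\in P(A)$ follows with no forward reference.

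A separate point, in your favour: what your induction actually delivers is $c_{i+1}=c_i+a_{i+1}$ together with $a_{i+1}\le c_i+1$, not the literal ``$c_{i+1}=c_i+a_i+1$'' and ``$c_i+1\ge a_i+1$'' printed in the statement. The printed recursion is false in general: for an admissible $A$ beginning $1,2,4,\dots$ (e.g.\ with $b_1=8$, so $k=3$) one has $c_3=c_2+a_3=7=b_1-1$, whereas $c_2+a_2+1=6$. The subscript in Chen and Fang's $a_{i+1}$ has evidently been corrupted into $a_i+1$ in transcription; your version is the correct one, and the rest of the paper only ever invokes the terminal identity $P(\{a_1,\dots,a_k\})=[0,b_1-1]$, which is unaffected.
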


\begin{lemma}\label{lem:2.3}
Given positive integers $u\in\{4,7,8\}\cup\{u:u\ge11\}$, $v\ge 3u+5$ and $k\ge3$. If $A$ is an infinite set of positive integers such that
$$P(A)=\mathbb{N}\setminus \{d_1<d_2<\cdots<d_{k-1}<b_{k}<\cdots\}$$
and $a\le \sum_{\substack{a'<a\\a'\in A}}a'+1$ for all $a\in A$ with $a>u+1$, then there exists a subset $A_1\subseteq A$ such that
$$P(A_1)=[0,d_1+d_2]\setminus\{d_1,d_2\}$$
and $\min \{A\setminus A_1\} >u+1$.
\end{lemma}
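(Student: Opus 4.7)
The plan is to identify $A_1$ as a particular initial segment of $A$ (listing its elements in increasing order as $a_1 < a_2 < \cdots$) and to track by induction the partial subset-sum sets $P(\{a_1, \ldots, a_i\})$ in a precisely controlled form.

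First, apply Lemma~\ref{lem:2.2} with $b_1 = u$ to produce an initial segment $\{a_1, \ldots, a_k\}$ of $A$ with $P(\{a_1, \ldots, a_k\}) = [0, u-1]$ and $a_k < u$. Since $u+1 \notin \{d_1, \ldots, d_{k-1}, b_k, \ldots\}$ (because $d_1 = u$ and $d_2 = v \geq 3u + 5 > u+1$), we have $u+1 \in P(A)$. But $u+1 \notin P(\{a_1, \ldots, a_k\}) = [0, u-1]$ and no element of $A$ equals $u$ (as $u \notin P(A)$); the only way to produce $u+1$ from $A$ is therefore to include $u+1$ itself, which forces $a_{k+1} = u+1$ and yields $P(\{a_1, \ldots, a_{k+1}\}) = [0, 2u] \setminus \{u\}$.

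The core of the argument is an induction on $j \geq 0$ maintaining the invariant $P(\{a_1, \ldots, a_{k+1+j}\}) = [0, 2u + R_j] \setminus \{u, u + R_j\}$, where $R_j = a_{k+2} + \cdots + a_{k+1+j}$ (reading $R_0 = 0$, so the two gaps coincide at the base case). The inductive step uses the smallness hypothesis $a_{k+2+j} \leq 2u + R_j + 1$. A direct computation of how the subset-sum set evolves upon adjoining one more element shows that the invariant is preserved exactly when $a_{k+2+j} \leq u + R_j$, in which case the gap set becomes $\{u, u + R_{j+1}\}$ with $R_{j+1} = R_j + a_{k+2+j}$. The alternative case $a_{k+2+j} \geq u + R_j + 1$ introduces additional gaps (notably $u + a_{k+2+j}$), which I would rule out by showing that any such extra gap lying in $[0, b_k)$ but not equal to some $d_i$ contradicts $P(A) = \mathbb{N} \setminus \{d_1, \ldots, d_{k-1}, b_k, \ldots\}$. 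Moreover, the requirement $v \notin P(A)$ forces $R_{j+1}$ to avoid the range $[v - 2u, v - u - 1]$ (in which $v$ would lie strictly between the two gaps $u$ and $u + R_{j+1}$ of the candidate set), so the sequence $R_j$ must jump from a value below $v - 2u$ directly to exactly $v - u$.

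Once $R_M = v - u$ is attained for some $M$, one has $P(\{a_1, \ldots, a_{k+1+M}\}) = [0, u+v] \setminus \{u, v\}$ as desired; setting $A_1 = \{a_1, \ldots, a_{k+1+M}\}$ gives the subset-sum identity, while $\min\{A \setminus A_1\} = a_{k+2+M} > a_{k+1+M} > u+1$. The hardest part is the inductive case analysis: carefully ruling out the bad configurations (extra-gap extensions and forbidden intermediate values of $R_j$) by leveraging the specific shape of $\{d_1, \ldots, d_{k-1}\}$ together with the smallness condition on the elements of $A$.
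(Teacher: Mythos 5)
Your overall strategy (greedy initial segment via Lemma~\ref{lem:2.2}, forcing $a_{k+1}=u+1$, then inducting on the two-gap invariant $P(\{a_1,\dots,a_{k+1+j}\})=[0,2u+R_j]\setminus\{u,u+R_j\}$) is the same as the paper's, and the beginning and the termination analysis (the value $R_j$ cannot land in $[v-2u,v-u-1]$ or overshoot $v-u$, so the process must stop exactly at $R_M=v-u$) are sound. But there is a genuine gap in the inductive step: your claim that ``the invariant is preserved exactly when $a_{k+2+j}\le u+R_j$'' is false. Doing the direct computation, $u+R_j$ is covered by $a_{k+2+j}+P(\{a_1,\dots,a_{k+1+j}\})$ iff $u+R_j-a_{k+2+j}$ lies in $[0,2u+R_j]\setminus\{u,u+R_j\}$, which requires \emph{both} $a_{k+2+j}\le u+R_j$ \emph{and} $a_{k+2+j}\ne R_j$. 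In the exceptional case $a_{k+2+j}=R_j$ (perfectly consistent with all your hypotheses once $j\ge2$), the new subset-sum set is $[0,2u+2R_j]\setminus\{u,\,u+R_j,\,u+2R_j\}$: a \emph{third} gap appears at $u+R_j$, and whether it becomes $d_2$ or is filled by later elements depends on the size of $a_{k+3+j}$. This is precisely the configuration that occupies most of the paper's proof (the sequence $t_1<t_2<\cdots$ of exceptional indices, each split into the subcases $a_{k+t_i+1}>a_{k+t_i}+u$, which terminates with $d_2=a_{k+t_i}+u$ and $A_1=\{a_1,\dots,a_{k+t_i-1}\}$, versus $a_{k+t_i+1}\le a_{k+t_i}+u$, which fills the middle gap and restores the two-gap invariant). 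Your proposal, as written, silently assumes this case cannot occur.

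A secondary inaccuracy: you say you would ``rule out'' the alternative case $a_{k+2+j}\ge u+R_j+1$, but that case is not to be ruled out --- it is exactly the successful termination, occurring when $R_j=v-u$ so that the newly permanent gap $u+R_j$ equals $d_2=v$; for $R_j<v-2u$ it is the case that yields a contradiction. Your closing sentence suggests you understand this, but the case labels should be straightened out, and above all the $a_{k+2+j}=R_j$ branch must be added and resolved for the induction to close.
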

\begin{proof} Let $A=\{a_1<a_2<\cdots\}$ be an infinite set of positive integers such that
$$P(A)=\mathbb{N}\setminus \{d_1<d_2<\cdots<d_{k-1}<b_{k}<\cdots\}$$
and $a\le \sum_{\substack{a'<a\\a'\in A}}a'+1$ for all $a\in A$ with $a>u+1$. It follows from Lemma \ref{lem:2.2} that
$$P(\{a_1,\cdots,a_k\})=[0,u-1],$$
where $k$ is the index such that $a_k<u<a_{k+1}$. Since $v\ge 3u+5>u+1$, it follows that $u+1\in P(A)$. Hence, $a_{k+1}=u+1$. Then
$$P(\{a_1,\cdots,a_{k+1}\})=[0,2u]\setminus\{u\}.$$
Noting that $a_{k+t}>a_{k+1}=u+1$ for any $t\ge 2$, we have
$$a_{k+t}\le a_1+\cdots+a_{k+t-1}+1=a_{k+2}+\cdots+a_{k+t-1}+2u.$$
Then
$$P(\{a_1,\cdots,a_{k+2}\})=[0,a_{k+2}+2u]\setminus\{u,a_{k+2}+u\}.$$
If $a_{k+2}+\cdots+a_{k+t-1}+u\ge a_{k+t}$ and $a_{k+2}+\cdots+a_{k+t-1}\neq a_{k+t}$ for all integers $t\ge3$, then
$$P(\{a_1,\cdots,a_{k+t}\})=[0,a_{k+2}+\cdots+a_{k+t}+2u]\setminus\{u,a_{k+2}+\cdots+a_{k+t}+u\}.$$
Then $d_2\ge a_{k+2}+\cdots+a_{k+t}+u$ for any integer $t\ge3$, which is impossible since $d_2$ is a given integer. So there are some integers $3\le t_1<t_2<\cdots$ such that $a_{k+2}+\cdots+a_{k+t_i-1}+u< a_{k+t_i}$ or $a_{k+2}+\cdots+a_{k+t_i-1}= a_{k+t_i}$, and
$$P(\{a_1,\cdots,a_{k+t_1-1}\})=[0,a_{k+2}+\cdots+a_{k+t_1-1}+2u]\setminus\{u,a_{k+2}+\cdots+a_{k+t_1-1}+u\}.$$

If $a_{k+2}+\cdots+a_{k+t_1-1}+u< a_{k+t_1}$, then $d_2=a_{k+2}+\cdots+a_{k+t_1-1}+u$ and
$$P(\{a_1,\cdots,a_{k+t_1-1}\})=[0,d_1+d_2]\setminus\{d_1,d_2\},~~a_{k+t_1}>u+1.$$
So the proof is finished.

If $a_{k+2}+\cdots+a_{k+t_1-1}= a_{k+t_1}$, then
$$P(\{a_1,\cdots,a_{k+t_1}\})=[0,a_{k+2}+\cdots+a_{k+t_1}+2u]\setminus\{u,a_{k+t_1}+u,a_{k+2}+\cdots+a_{k+t_1}+u\}.$$
If $a_{k+t_1+1}>a_{k+t_1}+u$, then
$$d_2=a_{k+t_1}+u=a_{k+2}+\cdots+a_{k+t_1-1}+u$$
and
$$a_{k+2}+\cdots+a_{k+t_1-1}+2u=d_1+d_2.$$
Therefore,
$$P(\{a_1,\cdots,a_{k+t_1-1}\})=[0,d_1+d_2]\setminus\{d_1,d_2\},~~a_{k+t_1}>u+1.$$
So the proof is finished. If $a_{k+t_1+1}\le a_{k+t_1}+u$, then
$$P(\{a_1,\cdots,a_{k+t_1+1}\})=[0,a_{k+2}+\cdots+a_{k+t_1+1}+2u]\setminus\{u,a_{k+2}+\cdots+a_{k+t_1+1}+u\}.$$
By the definition of $t_2$ and $a_{k+t_1+1}\le a_{k+t_1}+u$ we know that $t_2\neq t_1+1$. Noting that $a_{k+2}+\cdots+a_{k+t-1}+u\ge a_{k+t}$ and $a_{k+2}+\cdots+a_{k+t-1}\neq a_{k+t}$ for any integer $t_1<t<t_2$, we have
$$P(\{a_1,\cdots,a_{k+t_2-1}\})=[0,a_{k+2}+\cdots+a_{k+t_2-1}+2u]\setminus\{u,a_{k+2}+\cdots+a_{k+t_2-1}+u\}.$$
Similar to the way to deal with $t_1$, we know that there is always a subset $A_1\subseteq A$ such that
$P(A_1)=[0,d_1+d_2]\setminus\{d_1,d_2\}$ and $\min\{A\setminus A_1\}>u+1$ or there exists an infinity sequence of positive integers $l_i\ge3$ such that
$$P(\{a_1,\cdots,a_{k+l_i}\})=[0,a_{k+2}+\cdots+a_{k+l_i}+2u]\setminus\{u,a_{k+2}+\cdots+a_{k+l_i}+u\}.$$
Since $d_2$ is a given integer, it follows that the second case is impossible. This completes the proof of Lemma \ref{lem:2.3}.
\end{proof}
\begin{lemma}\label{lem:2.4}
Given positive integers $u\in\{4,7,8\}\cup\{u:u\ge11\}$, $v\ge 3u+5$ and $k\ge3$. Let $A$ be an infinite set of positive integers such that
$$P(A)=\mathbb{N}\setminus \{d_1<d_2<\cdots<d_{k}<b_{k+1}<\cdots\}$$
and $a\le \sum_{\substack{a'<a\\a'\in A}}a'+1$ for all $a\in A$ with $a>u+1$ and let $A_1$ be a subset of $A$ such that
$$P(A_1)=[0,u+v]\setminus\{d_1,d_2\}$$
and $\min\{A\setminus A_1\}>u+1$. Write $A\setminus A_1=\{a_1<a_2<\cdots\}$. Then $v+1\mid a_i$ for $i=1,2,\dots, m$, and
$$P(A_1\cup\{a_1,\dots,a_m\})=[0,\sum_{i=1}^{m}a_i+u+v]\setminus\{d_1,d_2,\dots,d_{n}\},$$
where $m$ is the index such that
$$\sum_{i=1}^{m-1}a_i+v<d_k\le \sum_{i=1}^{m}a_i+v$$
and
$$d_{n}=\sum_{i=1}^m a_i+v.$$
\end{lemma}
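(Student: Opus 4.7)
The plan is to prove Lemma \ref{lem:2.4} by induction on $i \in \{1, 2, \ldots, m\}$, maintaining the joint statement that $(v+1) \mid a_i$ and
$$P(A_1 \cup \{a_1, \ldots, a_i\}) = \Bigl[0, \textstyle\sum_{j=1}^i a_j + u + v\Bigr] \setminus \{d_1, d_2, \ldots, d_{n_i}\},$$
where $n_i$ is the unique index with $d_{n_i} = \sum_{j=1}^i a_j + v$. The key structural facts are $d_{2\ell+1} = \ell(v+1)+u$ and $d_{2\ell+2} = \ell(v+1)+v$, so every $d_j$ lies in residue class $u$ or $v$ modulo $v+1$, and consequently $d_{n_i+1} = \sum_{j=1}^i a_j + u + v + 1$.

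For the base case $i = 1$, note that $\sum_{a \in A_1} a = u+v$ since $u+v = \max P(A_1)$, so the hypothesis $a_1 \le \sum_{a'<a_1,\, a'\in A} a' + 1$ forces $a_1 \le u+v+1$. Because $k \ge 3$, the element $d_3 = u+v+1$ must avoid $P(A_1 \cup \{a_1\}) = P(A_1) \cup (a_1 + P(A_1))$; combining $P(A_1) = [0,u+v]\setminus\{u,v\}$, $a_1 > u+1$, and $a_1 \le u+v+1$, the requirement $u+v+1 - a_1 \notin P(A_1)$ pins down $a_1 = v+1$, and the $P$-formula with $n_1 = 4$ follows by direct union calculation.

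For the inductive step with $i \le m$, assume the claim holds for all indices less than $i$. Then $S_{i-1} := \sum_{j<i} a_j$ is a multiple of $v+1$ and $d_{n_{i-1}+1} = S_{i-1}+u+v+1$. The sum hypothesis gives $a_i \le S_{i-1}+u+v+1$, with equality excluded because it would give $a_i = d_{n_{i-1}+1} \in B$, contradicting $a_i \in A$. Hence $a_i \le S_{i-1}+u+v$, and requiring $d_{n_{i-1}+1}$ to remain missing from the union forces $d_{n_{i-1}+1} - a_i \in \{d_1, \ldots, d_{n_{i-1}}\}$, i.e., $a_i = S_{i-1}+u+v+1 - d_\ell$ for some $\ell \le n_{i-1}$. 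Reducing modulo $v+1$, the two subcases $d_\ell \equiv u$ and $d_\ell \equiv v \pmod{v+1}$ yield $a_i \equiv 0$ and $a_i \equiv u+1 \pmod{v+1}$ respectively. The second subcase is excluded by the parallel analysis of $d_{n_{i-1}+2} = S_{i-1}+2v+1$: a short computation gives $d_{n_{i-1}+2} - a_i \equiv v-u-1 \pmod{v+1}$, which is neither $u$ nor $v$ modulo $v+1$ thanks to $v \ge 3u+5$, and $d_{n_{i-1}+2} - a_i$ lies in $[0, S_{i-1}+u+v]$; hence $d_{n_{i-1}+2} - a_i \in P(A_1 \cup \{a_1, \ldots, a_{i-1}\})$, so $d_{n_{i-1}+2} \in P(A_1 \cup \{a_1, \ldots, a_i\})$, contradicting $d_{n_{i-1}+2} \in B$. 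Once $(v+1) \mid a_i$ is established, a routine union computation verifies the formula: the old holes $d_1, \ldots, d_{n_{i-1}}$ remain holes because their differences from $a_i$ are again elements of the $d$-sequence (hence not in $P$), and the values $a_i + d_j$ exceeding $S_{i-1}+u+v$ form the new holes, precisely $d_{n_{i-1}+1}, \ldots, d_{n_i}$ with $d_{n_i} = a_i + d_{n_{i-1}} = S_i + v$.

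The induction terminates at $i = m$: the defining inequality $S_{m-1}+v < d_k \le S_m+v$ translates to $n_{m-1} < k \le n_m$, so $d_k$ first enters the hole list at step $m$ and $d_{n_m} = S_m + v$ is the $d_n$ required in the statement. The main obstacle is ruling out the residue class $u+1 \pmod{v+1}$ for $a_i$ in the inductive step, which crucially uses $v \ge 3u+5$ both to ensure that $v-u-1$ is not a residue of any $d_j$ modulo $v+1$ and to place $d_{n_{i-1}+2} - a_i$ within the range covered by $P(A_1 \cup \{a_1, \ldots, a_{i-1}\})$.
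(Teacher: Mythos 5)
Your overall strategy matches the paper's (the paper inducts on $k$ rather than on the index $i$, but the content is the same: force $a_i\equiv 0$ or $u+1\pmod{v+1}$ from the requirement that the next forbidden value stay uncovered, then rule out the residue $u+1$ using $v\ge 3u+5$). However, there is a genuine gap in how you rule out that residue. You argue that if $a_i\equiv u+1\pmod{v+1}$ then $d_{n_{i-1}+2}=S_{i-1}+2v+1$ would land in $P(A_1\cup\{a_1,\dots,a_i\})$, ``contradicting $d_{n_{i-1}+2}\in B$.'' But the hypothesis only guarantees that $d_1,\dots,d_k$ are missing from $P(A)$; beyond $d_k$ the missing values are the unknown $b_{k+1}<b_{k+2}<\cdots$, and whether $d_{k+1}$ is missing is exactly what Lemma \ref{lem:2.5} is later trying to establish. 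Your argument therefore needs $n_{i-1}+2\le k$, and this can fail: from $d_{n_{i-1}}=S_{i-1}+v\le S_{m-1}+v<d_k$ one only gets $n_{i-1}\le k-1$, and the case $n_{i-1}=k-1$ genuinely occurs, namely when $i=m$, $d_{k-1}=S_{m-1}+v$ and $d_k=S_{m-1}+u+v+1$ (this is precisely the nontrivial case of the paper's induction on $k$). In that configuration $d_{n_{i-1}+2}=d_{k+1}$, which you have no right to assume lies in $B$, so the final step $i=m$ of your induction is unsupported.

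The gap is repairable, and the repair is what the paper does: in the subcase $a_i=(v+1)T'+u+1$, instead of looking upward at $d_{n_{i-1}+2}$, look at the smaller value $a_i+(v-u-1)=(v+1)T'+v$. Since $v\ge 3u+5$ gives $u<v-u-1<v$, we have $v-u-1\in P(A_1)$, hence $(v+1)T'+v\in a_i+P(A_1)\subseteq P(A)$; on the other hand, in this subcase $d_\ell\equiv v\pmod{v+1}$ forces $d_\ell\ge v$, so $a_i\le d_{n_{i-1}+1}-v$ and therefore $(v+1)T'+v=a_i+v-(u+1)<a_i+v\le d_{n_{i-1}+1}\le d_k$. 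Thus $(v+1)T'+v$ equals some $d_j$ with $j\le k$ and is genuinely forbidden, giving the contradiction using only the data the hypothesis actually provides. Everything else in your write-up (the base case, the identification $a_i=d_{n_{i-1}+1}-d_\ell$ with $\ell\le n_{i-1}$, and the union computation producing the new holes $d_{n_{i-1}+1},\dots,d_{n_i}$) is correct.
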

\begin{proof} We will take induction on $k\ge3$. For $k=3$, by $a_1>u+1$ we know that
$$v<d_3=u+v+1\le a_1+v,$$
that is, $m=1$. It is enough to prove that $v+1\mid a_1$ and
$$P(A_1\cup\{a_1\})=[0,a_1+u+v]\setminus\{d_1,d_2,\dots,d_{n}\},$$
where $d_{n}=a_1+v$.
Since $d_3\notin P(A)$ and $[0,v-1]\setminus\{u\}\subseteq P(A_1)$ and
$$a_1\le \sum_{\substack{a'<a_1\\a'\in A}}a'+1=\sum_{a'\in A_1}a'+1=u+v+1=d_3< a_1+v,$$
it follows that $d_3=a_1+u$, that is, $a_1=v+1$. Since
$$P(A_1)=[0,u+v]\setminus\{d_1,d_2\},$$
it follows that
$$a_1+P(A_1)=[a_1,a_1+u+v]\setminus\{a_1+d_1,a_1+d_2\}.$$
Then
$$P(A_1\cup\{a_1\})=[0,a_1+u+v]\setminus\{d_1,d_2,d_3,d_4\},$$
where $d_4=a_1+v$.

Suppose that $ v+1\mid a_i$ for $i=1,2,\dots,m$ and
\begin{equation}\label{eq0}
P(A_1\cup\{a_1,\dots,a_{m}\})=[0,\sum_{i=1}^{m}a_i+u+v]\setminus\{d_1,d_2,\dots,d_{n}\},
\end{equation}
where $m$ is the index such that
$$\sum_{i=1}^{m-1}a_i+v<d_{k-1}\le \sum_{i=1}^{m}a_i+v$$
and
$$d_{n}=\sum_{i=1}^{m} a_i+v.$$
If $d_{k-1}< \sum_{i=1}^{m}a_i+v$, then $d_{k}\le \sum_{i=1}^{m}a_i+v$. Then the proof is finished. If $d_{k-1}=\sum_{i=1}^{m}a_i+v$, then $d_{k}=\sum_{i=1}^{m}a_i+v+u+1$. It follows that
$$\sum_{i=1}^{m}a_i+v<d_{k}\le \sum_{i=1}^{m+1}a_i+v.$$
It is enough to prove that $ v+1\mid a_{m+1}$ and
$$P(A_1\cup\{a_1,\dots,a_{m+1}\})=[0,\sum_{i=1}^{m+1}a_i+u+v]\setminus\{d_1,d_2,\dots,d_{n'}\},$$
where
$$d_{n'}=\sum_{i=1}^{m+1} a_i+v.$$
Since $a_{m+1}\neq d_k$ and
$$a_{m}<a_{m+1}\le \sum_{\substack{a<a_{m+1}\\a\in A}} a+1=\sum_{i=1}^{m}a_i+u+v+1=d_{k},$$
it follows that there exists a positive integer $T$ such that
$$a_{m+1}< (v+1)T+u\le a_{m+1}+v+1$$
and
$$(v+1)T+u\le d_k.$$
Note that $d_i=(v+1)T+u\notin P(A)$ for some $i\le k$ and $[1,v+1]\setminus\{u,v\}\in P(A_1)$. Hence, $(v+1)T+u=a_{m+1}+u$ or $(v+1)T+u=a_{m+1}+v$.
If $(v+1)T+u=a_{m+1}+u$, then $a_{m+1}=(v+1)T$. If $(v+1)T+u=a_{m+1}+v$, then $a_{m+1}=(v+1)(T-1)+u+1$.
Since $v \ge 3u+5$, it follows that
$$a_{m+1}+u<(v+1)(T-1)+v<a_{m+1}+v.$$
Note that $[u+1,v-1]\subseteq P(A_1)$. Then $(v+1)(T-1)+v\in P(A)$, which is impossible. Hence, $v+1\mid a_{m+1}$. Moreover, $a_{m+1}=(v+1)T$. Since
$$a_{m+1}+P(A_1\cup\{a_1,\dots,a_{m}\})=[a_{m+1},\sum_{i=1}^{m+1}a_i+u+v]\setminus\{a_{m+1}+d_1,\dots,a_{m+1}+d_{n}\}$$
and
$$a_{m+1}+d_1=(v+1)T+u\le d_k=\sum_{i=1}^{m}a_i+v+u+1=d_{n+1},$$
it follows from \eqref{eq0} that
$$P(A_1\cup\{a_1,\dots,a_{m+1}\})=[0,\sum_{i=1}^{m+1}a_i+u+v]\setminus\{d_1,d_2,\dots,d_{n'}\},$$
where
$$d_{n'}=a_{m+1}+d_n=\sum_{i=1}^{m+1} a_i+v.$$
This completes the proof of Lemma \ref{lem:2.4}.

\end{proof}

\begin{lemma}\label{lem:2.5}
Given positive integers $u\in\{4,7,8\}\cup\{u:u\ge11\}$, $v\ge 3u+5$ and $k\ge3$. If $A$ is an infinite set of positive integers such that
$$P(A)=\mathbb{N}\setminus \{d_1<d_2<\cdots<d_{k}<b_{k+1}<\cdots\}$$
and $a\le \sum_{\substack{a'<a\\a'\in A}}a'+1$ for all $a\in A$ with $a>u+1$, then $b_{k+1}\ge d_{k+1}$.
\end{lemma}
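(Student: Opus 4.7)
The plan is to invoke Lemmas \ref{lem:2.3} and \ref{lem:2.4} to obtain an explicit description of an initial segment of $P(A)$, and then read off $b_{k+1} \ge d_{k+1}$ via a short case split on whether $d_k$ coincides with the top missing value produced by Lemma \ref{lem:2.4}.

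First I would apply Lemma \ref{lem:2.3} to the given $A$ to produce a subset $A_1 \subseteq A$ with $P(A_1) = [0, u+v] \setminus \{d_1, d_2\}$ and $\min\{A \setminus A_1\} > u+1$. Writing $A \setminus A_1 = \{a_1 < a_2 < \cdots\}$ and feeding this into Lemma \ref{lem:2.4} with the same $k$ yields the identity
$$P(A_1 \cup \{a_1, \ldots, a_m\}) = [0, S + u + v] \setminus \{d_1, \ldots, d_n\},$$
where $S = \sum_{i=1}^{m} a_i$, the index $m$ is pinned by $\sum_{i=1}^{m-1} a_i + v < d_k \le S + v$, and $d_n = S + v$. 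In particular $d_k \le d_n$, and the fact that $v+1 \mid a_i$ for $i \le m$ forces $S = (v+1)L$ for some integer $L$, so $d_n = d_{2L+2}$.

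The decisive observation is that $P(A_1 \cup \{a_1, \ldots, a_m\}) \subseteq P(A)$, so every element of $[0, S+u+v] \setminus \{d_1, \ldots, d_n\}$ belongs to $P(A)$. Since the sequence $d_1 < d_2 < \cdots$ is strictly increasing, no $d_j$ sits in the open interval $(d_k, d_{k+1})$, and it therefore suffices to check that $(d_k, d_{k+1}) \subseteq [0, S+u+v]$.

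This final check splits into two cases. If $d_k < d_n$ then $d_{k+1} \le d_n = S + v \le S + u + v$, and the inclusion is immediate. If $d_k = d_n = d_{2L+2}$ then $d_{k+1} = d_{2L+3} = (v+1)(L+1) + u = d_n + u + 1$, so $(d_k, d_{k+1}) = [S+v+1, S+u+v]$, which again fits inside $[0, S+u+v]$. In both cases every integer strictly between $d_k$ and $d_{k+1}$ lies in $P(A)$, giving $b_{k+1} \ge d_{k+1}$. Since Lemma \ref{lem:2.4} has already done the structural heavy lifting, no genuine obstacle remains here; the only thing one has to plan carefully is this case split on the position of $d_k$ relative to $d_n = S+v$.
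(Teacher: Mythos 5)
Your proposal is correct, and its skeleton matches the paper's: apply Lemma \ref{lem:2.3} to get $A_1$, apply Lemma \ref{lem:2.4} to get $P(A_1\cup\{a_1,\dots,a_m\})=[0,S+u+v]\setminus\{d_1,\dots,d_n\}$ with $d_n=S+v\ge d_k$, then split on $d_k<d_n$ versus $d_k=d_n$. The case $d_k<d_n$ is handled identically in both arguments ($k+1\le n$, so $d_{k+1}\le d_n\le S+u+v$). Where you genuinely diverge is the case $d_k=d_n$: the paper brings in the next element $a_{m+1}$, bounds it by $\sum_{i=1}^m a_i+u+v+1$, and sub-splits on whether $a_{m+1}>d_k$ (reading $b_{k+1}\ge a_{m+1}+d_1$ off the translate $a_{m+1}+P(A_1\cup\{a_1,\dots,a_m\})$) or $a_{m+1}<d_k$ (rerunning the divisibility argument of Lemma \ref{lem:2.4} to get $a_{m+1}=(v+1)T$ and extending the structure formula by one more term). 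You avoid $a_{m+1}$ entirely by observing that the divisibility $v+1\mid a_i$ already pins $d_k=d_n=d_{2L+2}$ with $S=(v+1)L$, hence $d_{k+1}=d_k+u+1$ and $(d_k,d_{k+1})=[S+v+1,\,S+u+v]$ sits inside the window $[0,S+u+v]$ already certified by Lemma \ref{lem:2.4}; since the excluded values $d_1,\dots,d_n$ are all $\le d_k$, every integer in that gap lies in $P(A)$. This is a clean shortcut: it buys a shorter proof that reuses only information already extracted, at the small cost of having to justify that $d_n$ has even index so that the successor gap is $u+1$ (which you do correctly via $S\equiv 0 \pmod{v+1}$). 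Both arguments are sound; yours is the more economical.
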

\begin{proof} By Lemma \ref{lem:2.3} we know that there exists $A_1\subseteq A$ such that
$$P(A_1)=[0,u+v]\setminus\{d_1,d_2\}$$
and $\min\{A\setminus A_1\}>u+1$. Write $A\setminus A_1=\{a_1<a_2<\cdots\}$. By Lemma \ref{lem:2.4} we know that $v+1\mid a_i$ for $i=1,2,\dots, m$ and
$$P(A_1\cup\{a_1,\dots,a_m\})=[0,\sum_{i=1}^{m}a_i+u+v]\setminus\{d_1,d_2,\dots,d_{n}\},$$
where $m$ is the index such that
$$\sum_{i=1}^{m-1}a_i+v<d_k\le \sum_{i=1}^{m}a_i+v$$
and
$$d_{n}=\sum_{i=1}^m a_i+v.$$
If $d_k<\sum_{i=1}^m a_i+v$, then $d_{k+1}\le\sum_{i=1}^m a_i+v=d_{n}$. Hence, $k+1\le n$. Thus, $b_{k+1}\ge d_{k+1}$.
If $d_k=\sum_{i=1}^m a_i+v$, then $d_{k+1}=\sum_{i=1}^m a_i+u+v+1$ and
\begin{equation}\label{eq:2.5}
P(A_1\cup\{a_1,\dots,a_m\})=[0,\sum_{i=1}^{m}a_i+u+v]\setminus\{d_1,\dots,d_{k}\}
\end{equation}
and
\begin{equation}\label{eq:2.6}
a_{m+1}+P(A_1\cup\{a_1,\dots,a_m\})=[a_{m+1},\sum_{i=1}^{m+1}a_i+u+v]\setminus\{a_{m+1}+d_1,\dots,a_{m+1}+d_{k}\}.
\end{equation}
Note that
$$a_{m}<a_{m+1}\le \sum_{\substack{a<a_{m+1}\\a\in A}} a+1=\sum_{i=1}^{m}a_i+u+v+1=d_{k+1}.$$
By $a_{m+1}\neq d_{k}$ we divide into two cases according to the value of $a_{m+1}$.

{\bf Case 1}: $d_{k}<a_{m+1}\le d_{k+1}$. It follows from \eqref{eq:2.5} and \eqref{eq:2.6} that
$$b_{k+1}\ge a_{m+1}+d_1\ge d_{k}+d_{1}+1=\sum_{i=1}^{m}a_i+v+u+1=d_{k+1}.$$

{\bf Case 2}: $a_{m}<a_{m+1}< d_{k}$. Similar to the proof of Lemma \ref{lem:2.4}, we know that there exists a positive integer $T$ such that
$$a_{m+1}=(v+1)T,~~a_{m+1}+d_1=(v+1)T+u\le d_k.$$
It follows from \eqref{eq:2.5} and \eqref{eq:2.6} that
\begin{equation}\label{eq:2.7}
P(A_1\cup\{a_1,\dots,a_{m+1}\})=[0,\sum_{i=1}^{m+1}a_i+u+v]\setminus\{d_1,\dots,d_{n'}\},
\end{equation}
where
$$d_{n'}=a_{m+1}+d_k.$$
Then $n'\ge k+1$. Thus $b_{k+1}\ge d_{k+1}$.
\end{proof}

\emph{Proof of Theorem \ref{thm:1.1}:} It follows from Theorem \ref{thm:1.6} and Theorem \ref{thm:1.7} that $e_3=(v+1)+u$. For $k\ge3$, suppose that $A$ is an infinite set of positive integers such that
$$P(A)=\mathbb{N}\setminus \{e_1<e_2<\cdots<e_{k}<b_{k+1}<\cdots\}$$
and $a\le \sum_{\substack{a'<a\\a'\in A}}a'+1$ for all $a\in A$ with $a>u+1$, where $e_i(1\le i\le k)$ is defined in \eqref{eq:c}. By Lemma \ref{lem:2.5} we have $b_{k+1}\ge d_{k+1}$. By Lemma \ref{lem:2.1} we know that $d_{k+1}$ is the critical value, that is, $e_{k+1}=d_{k+1}$. This completes the proof of Theorem \ref{thm:1.1}.

\noindent\textbf{Acknowledgments.} This work was supported by the National Natural Science Foundation of China, Grant No.11771211 and NUPTSF, Grant No.NY220092.

\renewcommand{\refname}{References}

\end{document}